\newcommand{\lra}{\longrightarrow}
\def\ls{\leqslant}
\def\gs{\geqslant}
\newcommand{\dtensor}[2]{\otimes^{\!\mathsf L}_{#1}{#2}}
\newcommand{\shift}{{\sf\Sigma}}
\newcommand{\hh}[1]{\operatorname{H}(#1)}
\newcommand{\HH}[2]{\operatorname{H}_{#1}(#2)}
\newcommand\BB{\operatorname{B}}
\newcommand\ZZ{\operatorname{Z}}
\newcommand{\Tor}[4]{\operatorname{Tor}_{#1}^{#2}(#3,#4)}
\newcommand\Ker{\operatorname{Ker}}
\newcommand\Image{\operatorname{Im}}
\newcommand\edim{\operatorname{edim}}
\newcommand\depth{\operatorname{depth}}
\newcommand\im{\operatorname{Im}}
\newcommand\pd{\operatorname{proj\,dim}}
\newcommand\fm{{\mathfrak m}}
\newcommand\fn{{\mathfrak n}}
\newcommand\p{\mathfrak{p}}
\newcommand\BZ{{\mathbb Z}}
\newcommand\PP{\mathcal{P}}
\theoremstyle{plain}
\newtheorem{theorem}{Theorem}[section]
\newtheorem{proposition}[theorem]{Proposition}
\newtheorem{lemma}[theorem]{Lemma}
\newtheorem{corollary}[theorem]{Corollary}
\theoremstyle{definition}
\newtheorem{chunk}[theorem]{}
\theoremstyle{remark}
\newtheorem{remark}[theorem]{Remark}
\numberwithin{equation}{theorem}
\newcommand{\level}[2]{\operatorname{level}^{#1}\!(#2)}
\newcommand{\altlevel}[3]{\operatorname{level}_{#1}^{#2}(#3)}
\newcommand{\hht}[1]{\operatorname{height}{#1}}
\newcommand{\superheight}[1]{\operatorname{superheight}{#1}}
\newcommand{\Spec}[1]{\operatorname{Spec}{#1}}
\begin{document}

\title{Lower bounds on projective levels of complexes}


\date{\today}

\author[H. Altmann]{Hannah Altmann}
\address{Hannah Altmann \\Department of Mathematics and Computer Science\\Bemidji State University\\ 1500 Birchmont Drive NE Box 23\\ Bemidji, MN 56601}
\email{haltmann@bemidjistate.edu}

\author[E. Grifo]{Elo\'{i}sa Grifo}
\address{Elo\'{i}sa Grifo \\ Department of Mathematics \\  University of Virginia \\141 Cabell Drive, Kerchof Hall \\ Charlottesville, VA 22904, USA}

\email{er2eq@virginia.edu}

\author[J. Monta\~{n}o]{Jonathan Monta\~{n}o}
\address{Jonathan Monta\~{n}o \\ Department of Mathematics \\University of Kansas \\405 Snow Hall, 1460 Jayhawk Blvd \\Lawrence, KS 66045}
\email{jmontano@ku.edu}

\author[W. T. Sanders]{William Sanders}  
\address{William T. Sanders \\ Department of Mathematical Sciences \\ Norwegian University of Science and Technology, NTNU\\NO-7491 Trondheim, Norway}
\email{william.sanders@math.ntnu.no}

\author[T. Vu]{Thanh Vu}
\address{Thanh Vu \\ Department of Mathematics, University of Nebraska-Lincoln\\203 Avery Hall\\ Lincoln, NE 68588}
\email{tvu@unl.edu}


  \begin{abstract}
For an associative ring $R$, the projective level of a complex  $F$  is the smallest number of mapping cones needed to build $F$ from projective $R$-modules. We establish lower bounds for the projective level of $F$ in terms of the vanishing of homology of $F$.  We then use these bounds to derive a new version of The New Intersection Theorem for level when $R$ is a commutative Noetherian local ring. 
  \end{abstract}

\keywords{perfect complex, level, New Intersection Theorem, Koszul complex}
\subjclass[2010]{13D02, 16E45; 13D07, 13D25, 13H10, 20J06}

\maketitle

\section*{Introduction}
Let $R$ be an associative ring and $F$ a complex of left $R$-modules. The projective level of $F$, denoted by $\level{\PP}{F}$, is the smallest number of steps needed to assemble some projective resolution of $F$ from complexes of projective modules that have zero differentials (see Section \ref{sec:Pre} for details).  Projective level is a special case of a general notion of level in triangulated categories, which was introduced and studied by Avramov, Buchweitz, Iyengar, and Miller in \cite{ABIM}. It was partly motivated by earlier work of Christensen \cite{ghost1}; Bondal and Van den Bergh \cite{BV}; Dwyer, Greenlees, and Iyengar \cite{DGI}; Rouquier \cite{Rouquier}; and Krause and Kussin \cite{KK}.

Upper bounds for projective level are relatively easy to obtain, since any explicit construction of a projective resolution provides such a bound; in particular, $\level{\PP}{F}$ does not exceed the number of non-zero modules in a projective resolution of $F$.  This paper focuses on lower bounds.

In Section \ref{sec:gaps}, we prove the following theorem which gives a lower bound on the projective level of a complex $F$ in terms of the largest gap in the homology of $F$.

\smallskip
\noindent 
{\bf Theorem \ref{th:gaps}.} {\it Let $F$ be a complex of $R$-modules. Assume $\HH{i}{F} = 0$ for all $a < i < b$ with $a,b\in\mathbb{Z}$ and $\HH{0}{ \Omega_{b-1}^R(F)}$ is not projective. Then}
\[\level{\PP}{F} \gs b-a+1.\]
Here, $\Omega_{b-1}^R(F)$ is the $(b-1)$th syzygy of $F$ (see Section \ref{sec:Pre}).

Restricting to commutative Noetherian local rings,  in Section \ref{sec:Inter} we apply Theorem \ref{th:gaps} to deduce a new version of The New Intersection Theorem.   In the following result,  $\level{R}{F}$  is the smallest number of mapping cones needed to build $F$ from finitely generated free $R$-modules.

\smallskip
\noindent 
{\bf Theorem \ref{NewInterThm}.} 
{\it Let $R$ be a Noetherian local ring. Let
\[
\xymatrix{F := 0 \ar[r] & F_{n} \ar[r] & \cdots \ar[r] & F_{0} \ar[r] & 0}
\]
be a complex of finitely generated free $R$-modules such that $\HH{i}{F} $ has finite length for every $i \gs 1$. For any ideal $I$ that annihilates a minimal generator of $\HH{0}{F}$, there is an inequality
\[
\level{R}{F} \gs \dim(R) - \dim(R/I) + 1.
\]}
 This result refines strong forms of the Improved New Intersection Theorem of Evans and Griffith \cite{EG}, due to Bruns and Herzog \cite{BH} and Iyengar \cite{Iyengar}. The proof uses the existence of balanced big Cohen-Macaulay algebras which was recently proved in \cite{A}.   Furthermore, in Theorem \ref{NITDG}, we use Theorem \ref{NewInterThm} to prove that for every commutative Noetherian $R$-algebra $T$ there is an inequality
\[\level{R}{F}\gs \hht IT + 1.\]
This is a special case of a result in \cite{ABIM}.

In Section \ref{sec:Koszul}, we apply results in Section \ref{sec:gaps} to establish lower bounds on the projective level of Koszul complex of an ideal $I$ of a Noetherian local ring $R$.   In particular, we show that the level of the Koszul complex of the maximal ideal in a local ring is one more than the least number of generators of the maximal ideal.  Our motivation for studying the Koszul complex is to understand levels in the context of other better understood invariants.  

The proof of Theorem \ref{th:gaps} and other results in the paper rely on using the vanishing of homology to construct a sequence of ghost maps and then applying the \emph{Ghost Lemma}, Lemma \ref{ghost}.

\section{Preliminaries}\label{sec:Pre}

Let $R$ be an associative ring.  In this paper, complexes of $R$-modules will have the form
\[\xymatrix{F = \cdots \ar[r] & F_{n+1} \ar[r]^-{\partial^F_{n+1}} & F_n \ar[r]^-{\partial^F_{n}} & F_{n-1} \ar[r] & \cdots},\]
with $F_i = 0$ for all $i\ll 0$. Such complexes and their degree 0 chain maps form an abelian category that we denote by $C_+(R)$. We write $X\simeq Y$ if $X$ and $Y$ can be linked by a finite sequence of quasi-isomorphisms; i.e., chain maps that induce isomorphisms in homology. Modules are identified with complexes concentrated in degree 0.

Given a complex $F$, the {\it $k$th suspension of $F$} is the complex $\shift^k F$ with
\[\big( \shift^k F \big)_n = F_{n-k}\qquad \text{and}\qquad \partial^{\shift^k F} = (-1)^k \partial^F.\]
Also, set
\[\ZZ_n(F)=\Ker\partial_n^F\qquad \text{and}\qquad \BB_n(F)=\im\partial_{n+1}^F.\]
For every $i\in \BZ$, let $F_{\gs i}$ and  $F_{\ls i}$ denote the \emph{hard truncations} 
\[\xymatrix{F_{\gs i} := \cdots \ar[r] & F_{i+2} \ar[r]^-{\partial^F_{i+2}} & F_{i+1} \ar[r]^-{\partial_{i+1}^F} & F_i \ar[r] & 0},\]
\[\xymatrix{F_{\ls i} := 0 \ar[r] & F_{i} \ar[r]^-{\partial^F_{i}} & F_{i-1} \ar[r]^-{\partial^F_{i-1}} & F_{i-2} \ar[r] & \cdots},\]
with $F_i$ in homological degree $i$. Note that there is a natural projection 
\[\tau^i\colon F\lra F_{\gs i}\] where $\tau^i_n$ is the identity for $n\gs i$ and zero otherwise.  A projective resolution of $F$ is a quasi-isomorphism $P\lra F$, where $P$ is a complex of projective $R$-modules. Let $[X, Y]$ denote the set of homotopy classes of chain maps from $X\lra Y$ . For a proof of the following result see \cite[5.7.2 and Exercise 5.7.3]{We}.
\begin{proposition}\label{referee} 
Let $F$ be an complex of $R$-modules in $C_+(R)$.
\begin{enumerate}
\item[$(1)$] $F$ has a projective resolution $\pi\colon P\lra F$.
\item[$(2)$] If $\pi'\colon  P'\lra F$ is another projective resolution, then there is a chain map $\rho\colon P'\lra P$ such that $\pi'$ is homotopic to $\pi\rho$. For every complex of $R$-modules $G$, composition with $\rho$ induces a bijective map $[P, G] \to[P',G]$.
\item[$(3)$] If $R$ is left Noetherian and $\HH{i}{F}$ is finitely generated for each $i$, then $P$ can be chosen with $P_i$ finitely generated for each $i$.
\end{enumerate}
\end{proposition}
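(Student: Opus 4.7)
The plan is to use the classical technique of constructing projective resolutions of bounded-below complexes by induction on homological degree, together with the standard lifting lemma for chain maps into complexes. After a shift we may assume $F_i=0$ for $i<0$. For part $(1)$, I would construct $P$ and the quasi-isomorphism $\pi\col P\to F$ inductively: suppose a projective complex $P_{\ls n}$ equipped with a chain map to $F_{\ls n}$ has been built so that the induced map is an isomorphism on $\HH{i}{-}$ for $i<n$ and a surjection on $\HH{n}{-}$. To extend one degree further, choose a projective $P_{n+1}$ mapping onto an appropriate submodule of $F_{n+1}$, namely one large enough to kill the kernel of the current map on $\HH{n}{-}$ while still lifting compatibly to $F_{n+1}$. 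This choice simultaneously determines $\partial^P_{n+1}$ and $\pi_{n+1}$. Letting $n\to\infty$ yields the desired resolution.

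For part $(2)$, the crucial input is the standard lifting lemma: any chain map from a bounded-below complex of projectives to $F$ lifts through the quasi-isomorphism $\pi\col P\to F$, uniquely up to chain homotopy. Applied to $\pi'\col P'\to F$, this produces $\rho\col P'\to P$ with $\pi\rho\simeq\pi'$. Because $\pi$ and $\pi'$ are quasi-isomorphisms, so is $\rho$, and any quasi-isomorphism between bounded-below complexes of projectives is a homotopy equivalence (apply the lifting lemma in the reverse direction to build a homotopy inverse). Composition with a homotopy equivalence induces a bijection on homotopy classes, which yields the required bijectivity of $[P,G]\to[P',G]$ for every complex $G$.

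For part $(3)$, inspect the construction in $(1)$ under the hypothesis that $R$ is left Noetherian and each $\HH{i}{F}$ is finitely generated. At every inductive step, the target module onto which $P_{n+1}$ must surject is a finitely generated $R$-module, by Noetherianity together with the finite generation of the homology. Hence each $P_n$ can be chosen finitely generated free. The chief obstacle is the bookkeeping in $(1)$: at each step one must simultaneously enforce the complex relation $\partial^P_n\partial^P_{n+1}=0$, the chain-map compatibility $\pi_n\partial^P_{n+1}=\partial^F_{n+1}\pi_{n+1}$, and the precise homology condition needed to drive the induction. Once $(1)$ and the lifting lemma are in place, parts $(2)$ and $(3)$ follow by routine arguments.
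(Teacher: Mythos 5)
Your outline is the standard degreewise inductive construction and is a genuinely different route from the one the paper points to: the citation to \cite[5.7.2]{We} is to the Cartan--Eilenberg machinery (take compatible projective resolutions of each $F_n$, of the cycles, boundaries, and homologies, and totalize the resulting double complex), whereas you build $P$ directly, one degree at a time. The direct route is arguably better suited to part~(3), since the Cartan--Eilenberg totalization does not produce finitely generated terms when the modules $F_n$ themselves fail to be finitely generated.

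Two points need tightening before this is a proof. In the inductive step for~(1), you only arrange that $P_{n+1}$ kills the excess cycles in degree~$n$; you do not arrange the surjectivity in degree~$n+1$ that your own induction hypothesis requires at the next stage. The clean fix is to let $P_{n+1}$ surject onto the pullback $\ZZ_n(P_{\ls n})\times_{F_n}F_{n+1}$ and take $\partial^P_{n+1}$ and $\pi_{n+1}$ to be the two projections; this both kills the excess and hits all of $\ZZ_{n+1}(F)$. For~(3), the assertion that ``the target module onto which $P_{n+1}$ must surject is finitely generated'' is not justified, and is in fact false for the pullback just described, since it contains $\{0\}\times\ZZ_{n+1}(F)$ and $\ZZ_{n+1}(F)$ need not be finitely generated---only the homology of $F$ is assumed to be. What one must observe instead is that $P_{n+1}$ only needs to hit a finite generating set of the kernel of $\ZZ_n(P_{\ls n})\to\HH{n}{F}$ (finitely generated because $R$ is Noetherian and $P_n$ is finitely generated) together with finitely many cycles in $F_{n+1}$ whose classes generate $\HH{n+1}{F}$. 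This requires running the induction with the weaker hypothesis that $\ZZ_n(P_{\ls n})\to\HH{n}{F}$ is surjective, rather than $\ZZ_n(P_{\ls n})\to\ZZ_n(F)$, but with that change the argument closes. Part~(2) is fine as written.
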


For a complex $F$, 
let the \emph{projective level} of $F$, denoted by $\level{\PP}F$, be the least integer $n\gs 0$ with the following property: $F$ is quasi-isomorphic to a direct summand of a 
 complex $P$ with a filtration of 
 subcomplexes 
 \[P = P(n)\supseteq\cdots\supseteq P(0)\supseteq P(-1) = 0\]
 such that each $P (i)/P (i -1)$ is a bounded complex of projective $R$-modules with zero differentials.  If no such $n$ exists, let $\level{\PP}F = \infty$. 

A number, $\level{R}F$, is defined by replacing ``projective'' with``finitely generated free module" in the above definition.  This invariant is called the \emph{$R$-level} of $F$.  Quasi-isomorphic complexes have equal levels and the inequality  $\level{\PP}F\ls \level{R}F$ always holds.

A more general definition of level is defined and studied in \cite{ABIM} by using derived categories of $R$-modules.  For a class $\mathcal{X}$ of complexes, a numerical invariant $\altlevel{R}{\mathcal{X}}F$, called the $\mathcal{X}$-level of $F$, was defined in \cite[2.3]{ABIM}.  Roughly speaking, $\altlevel{R}{\mathcal{X}}F$ is the number of steps it takes to build $F$ from elements in $\mathcal{X}$ using mapping cones.  By \cite[4.2]{ABIM}, we have $\altlevel{R}{\{R\}}{F}=\level{R}F$, and the argument in \cite[4.2]{ABIM} shows that $\altlevel{R}{\PP}{F}=\level{\PP}F$ where $\PP$ is the class of projective modules.


The following basic facts follow from \cite[2.5.2]{ABIM} and \cite[2.4 (6)]{ABIM}.
\begin{lemma}\label{lemma1}
Let $F$ be a complex of $R$-modules.
\begin{enumerate}
\item[$(1)$] Assume each $F_i$ is projective and $F_i =0$  when $i<a$ or $i>b$ for
some integers $a\ls b$. Then 
\[\level{\PP}F\ls b-a + 1.\]
Furthermore, 
\[\level{R}F\ls b -a + 1\]
  if each $F_i$ is a finitely generated projective module.
\item[$(2)$] If $S$ is a ring and $f\colon C_+(R) \lra C_+(S)$ is an additive functor that preserves quasi-isomorphisms and satisfies $f(R) = S$, we have
\[\level{S}{f(F)}\ls \level{R}F.\]
  \end{enumerate}
  \end{lemma}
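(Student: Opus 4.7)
\emph{Proof proposal.} Both parts can be settled by explicit constructions of filtrations that directly realize the claimed bounds.

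For (1), my plan is to filter $F$ by its own hard truncations: set $P(i) := F_{\ls a+i}$ for $0 \ls i \ls b-a$ and $P(-1) := 0$. These form a chain of subcomplexes
\[0 = P(-1) \subseteq P(0) \subseteq \cdots \subseteq P(b-a) = F,\]
and each subquotient $P(i)/P(i-1)$ is the complex supported in the single homological degree $a+i$ whose only nonzero module is the projective $F_{a+i}$. This produces $b-a+1$ graded pieces, each a bounded complex of projectives with zero differentials, so the definition of projective level yields $\level{\PP}F \ls b-a+1$. Substituting ``finitely generated free'' for ``projective'' throughout gives the analogous bound $\level{R}F \ls b-a+1$ in the finitely generated free case.

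For (2), my plan is to push a filtration realizing $\level{R}F$ through the functor $f$. Fix a complex $P$ of which $F$ is quasi-isomorphic to a direct summand, together with a filtration $0 = P(-1) \subseteq P(0) \subseteq \cdots \subseteq P(n) = P$ of length $n = \level{R}F$ whose graded pieces are bounded complexes of finitely generated free $R$-modules with zero differentials. Each inclusion $P(i-1) \hookrightarrow P(i)$ is split in every homological degree, because the quotient is degreewise projective, so applying the additive functor $f$ produces a filtration $0 \subseteq f(P(0)) \subseteq \cdots \subseteq f(P(n)) = f(P)$ whose graded pieces are $f\big(P(i)/P(i-1)\big)$. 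Since $f$ is additive and $f(R) = S$, each such piece is a bounded complex of finitely generated free $S$-modules with zero differentials. Using additivity to transport the direct summand decomposition of $P$ to one of $f(P)$, together with the hypothesis that $f$ preserves quasi-isomorphisms, I conclude that $f(F)$ is quasi-isomorphic to a direct summand of $f(P)$, which gives $\level{S}{f(F)} \ls n$.

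The only subtle point is verifying that the merely additive functor $f$ preserves the short exact sequences $0 \to P(i-1) \to P(i) \to P(i)/P(i-1) \to 0$ that define the filtration; this reduces to the degreewise splitting noted above, which in turn is guaranteed by the projectivity of each graded piece. I do not anticipate this to be a serious obstacle.
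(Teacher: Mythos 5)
The paper does not actually prove this lemma; it cites \cite[2.5.2]{ABIM} and \cite[2.4(6)]{ABIM}, which work in the setting of triangulated categories and exact (i.e.\ triangulated) functors. Your proof of part (1) is the standard argument and is correct: filtering $F$ by its hard truncations $F_{\ls a+i}$ gives subquotients that are single projective (resp.\ finitely generated free) modules in one degree, so the level bound $b-a+1$ falls right out. This is exactly what the cited result in \cite{ABIM} amounts to once translated to chain level.

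Your proof of part (2) has the right global strategy (push a level-realizing filtration through $f$), but the step you flag as ``the only subtle point'' is where a genuine gap lives, and the reduction you propose there does not close it. The short exact sequences $0 \to P(i-1) \to P(i) \to P(i)/P(i-1) \to 0$ are split \emph{degreewise}, not split as sequences in $C_+(R)$; an additive functor on the category of complexes preserves biproducts of complexes, but it has no reason to send a degreewise-split short exact sequence of complexes to a short exact sequence (for instance, $P(i)$ is generally the mapping cone of a nontrivial map $\Sigma^{-1}\bigl(P(i)/P(i-1)\bigr) \to P(i-1)$, not a direct sum). A related issue occurs when you identify $f\bigl(P(i)/P(i-1)\bigr)$: a bounded complex of finitely generated free modules with zero differential is a finite sum of shifts $\Sigma^{j}R$, and additivity plus $f(R)=S$ only controls $f(\Sigma^{0}R)$; you would also need $f$ to commute with suspension to conclude $f(\Sigma^{j}R)=\Sigma^{j}S$. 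Both points are automatic when $f$ is defined degreewise from an additive functor on modules --- which covers every application in the paper, namely base change $-\otimes_R S$ --- and both are built into the notion of ``exact functor of triangulated categories'' used in \cite{ABIM}. So the gap is partly inherited from the paper's loose phrasing, but as written your argument would need an added hypothesis (that $f$ commutes with $\Sigma$ and preserves degreewise-split exact sequences, or simply that $f$ is induced degreewise by an additive functor on modules) before the filtration transport goes through.
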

Part (1) implies that $\level{\PP}F$ is finite if and only if $F$ is quasi-isomorphic to a bounded complex of projective modules, and $\level{R}F$ is finite if  these modules can be chosen to be finitely generated projective modules;   a complex with the latter property is said to be perfect.

A chain map of complexes $f\colon X \longrightarrow Y$ is said to be \emph{ghost} if the induced  map in homology, $\hh{f} \colon \hh{X} \longrightarrow \hh{Y}$, is zero.  The following result,  known as the \emph{Ghost Lemma}, will be used in the proofs of the main results.

\begin{lemma}[{Ghost Lemma \cite[Theorem 8.3]{ghost1}}]
\label{ghost}
Let $F\lra X$ be a chain map and
\[\xymatrix{X \ar[r] &X(1) \ar[r] &\cdots \ar[r] \ar[r] &X(n)}\]
be a sequence of ghost maps. If the composition $F\lra X(n)$ of these maps is not null-homotopic, then $\level{R}{F} \gs n+1$.
\end{lemma}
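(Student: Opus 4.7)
I would prove the contrapositive by induction on $n$: whenever $\level{R}{F}\le n$, any composition $F\to X\to X(1)\to\cdots\to X(n)$ in which the last $n$ arrows are ghost must be null-homotopic. To make this accessible to the triangle axioms, I first replace $F$ by a projective resolution using Proposition \ref{referee}, so that null-homotopy of a map out of $F$ becomes equivalent to vanishing in the derived category.

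The base case handles $F$ quasi-isomorphic to a summand of a bounded complex $P$ of projective modules with zero differentials. Because the composition $F\to X\to X(1)$ passes through the ghost arrow $X\to X(1)$, it is itself ghost as a map from $F$, so it suffices to show that any ghost chain map $g\colon P\to Y$ is null-homotopic. Decomposing $P=\bigoplus_k \shift^k P_k$, the restriction of $g$ to each summand is an $R$-linear map $P_k\to\ZZ_k(Y)$ whose composite with the canonical surjection $\ZZ_k(Y)\to \HH{k}{Y}$ vanishes, so its image lies in $\BB_k(Y)$. Projectivity of $P_k$ lifts this map through $Y_{k+1}\to\BB_k(Y)$, producing a contracting homotopy in degree $k$; summing over $k$ kills $g$, and passing to a direct summand is routine.

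For the inductive step, write $F$ as a summand of $P$ admitting a filtration $0=P(-1)\subseteq P(0)\subseteq\cdots\subseteq P(n-1)=P$ whose associated graded pieces are bounded projective complexes with zero differentials. Set $\bar P:=P/P(0)$; the induced shorter filtration realizes $\level{R}{\bar P}\le n-1$, while $P(0)$ is of base-case type. The short exact sequence $0\to P(0)\to P\to\bar P\to 0$ yields a distinguished triangle $P(0)\to P\to\bar P\to\shift P(0)$ in the derived category. Extending $F\to X$ along the retraction $P\to F$, the map $P(0)\to X\to X(1)$ is ghost with source of base-case type, so the previous step makes it null-homotopic, equivalently zero in the derived category. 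The triangle axioms then provide a factorization $P\to\bar P\to X(1)$ in the derived category, and the induction hypothesis, applied to $\bar P\to X(1)\to X(2)\to\cdots\to X(n)$ (whose last $n-1$ arrows remain ghost, with $\level{R}{\bar P}\le n-1$), shows $\bar P\to X(n)$ is null-homotopic. Composing back with $F\to P\to\bar P$ yields the required null-homotopy of $F\to X(n)$.

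The chief subtlety lies in freely interchanging \emph{null-homotopic} with \emph{zero in the derived category} during the factorization step. Working throughout with projective resolutions makes these two notions coincide, so the argument reduces to the explicit lifting in the base case together with the formal triangle-theoretic peeling-off afforded by the filtration.
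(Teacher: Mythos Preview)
The paper does not prove this lemma; it is quoted from \cite[Theorem 8.3]{ghost1} and used as a black box. So there is no in-paper argument to compare against. Your proof is the standard one: show that a ghost map out of a bounded complex of projectives with zero differential is null-homotopic by lifting through boundaries, then peel off one layer of the filtration via the distinguished triangle $P(0)\to P\to \bar P\to\shift P(0)$ and induct. That is exactly the argument in Christensen's paper and it is correct. (A minor indexing slip: with the paper's convention the filtration witnessing $\level{R}{F}\le n$ runs up to $P(n)$, not $P(n-1)$, but this does not affect the mathematics.)

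One comment on the subtlety you raise at the end. Passing to a projective resolution $P\to F$ does make ``null-homotopic out of $P$'' coincide with ``zero in $\dcat R$'', but it does \emph{not} automatically preserve the hypothesis: a chain map $F\to X(n)$ that is not null-homotopic can become null-homotopic after precomposition with $P\to F$ (take $F$ bounded-below acyclic but not contractible, all $X(i)=F$, and all maps the identity; every map is ghost, the composite is $\idmap_F$, yet $\level{R}{F}=0$). So the lemma, read literally for arbitrary $F$, is false; the correct statement---and the one in \cite{ghost1}---is phrased in $\dcat R$. In every application in this paper $F$ is already a complex of projectives, so the distinction disappears, and your reduction is then exactly the right move.
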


Assuming $\HH{i\ll 0}{F}=0$ and choosing a projective resolution $P$ of $F$, we denote the {\it $n$th syzygy module} of $F$ for every $n\in \BZ$ by $\Omega_{n}^R(F):=\shift^{-n}(P_{\gs n})$.   It follows from \cite[1.2]{AIatIll} that the condition $\HH{0}{\Omega_{n}^R(F)}$ is projective does not depend on the choice of $P$.

\section{Gaps and projective levels}\label{sec:gaps}

Let $R$ be an associative ring.  The following is the main result of this section. 
\begin{theorem}\label{th:gaps}
Let $F$ be a complex of $R$-modules. Assume that $\HH{i}{F} = 0$ for all $a < i < b$ with $a,b\in\mathbb{Z}$ and $\HH{0}{ \Omega_{b-1}^R(F)}$ is not projective. Then
\[\level{\PP}{F} \gs b-a+1.\]
\end{theorem}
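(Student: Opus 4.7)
The plan is to apply the Ghost Lemma (in the form that bounds $\level{\PP}{F}$---a classical ghost map is automatically $\PP$-ghost, since for a projective module $Q$ one has $\Hom{\dcat R}{\shift^i Q}{X} = \Hom{R}{Q}{\HH{-i}{X}}$) after producing a chain of $b-a$ ghost maps out of $F$ whose composition is not null-homotopic. Fix a projective resolution $P \to F$ and let $N := \HH{0}{\Omega_{b-1}^R(F)} \cong P_{b-1}/\Image(\partial_b^P)$. The base case $b-a=1$ is immediate: $\level{\PP}{F} = 1$ would realize $F$ as a direct summand of a bounded complex of projectives with zero differentials, which forces $N = P_{b-1}$ to be projective. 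So assume $b-a \gs 2$.

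For each $a+1 \ls k \ls b-1$, let $Y_k$ denote the soft truncation of $P_{\gs k}$ at degree $b-1$, i.e.\ the complex agreeing with $P_{\gs k}$ in degrees $<b-1$, vanishing in degrees $>b-1$, and equal to $P_{b-1}/\Image(\partial_b^P)$ in degree $b-1$. The gap hypothesis $\HH{i}{F}=0$ for $a<i<b$ ensures that $\hh{Y_k}$ is concentrated in degree $k$, where it equals $N_k := P_k/\Image(\partial_{k+1}^P)$; in particular $Y_{b-1}$ is the module $N$ placed in degree $b-1$. The natural chain maps $F \to Y_{a+1}$ (induced by $\tau^{a+1}\colon P \to P_{\gs a+1}$) and $Y_{k-1} \to Y_k$ for $a+2 \ls k \ls b-1$ (induced by the quotient $P_{\gs k-1} \to P_{\gs k}$ that is the identity in degrees $\gs k$ and zero in degree $k-1$), each post-composed with the relevant truncation, are all ghost: the internal maps because source and target have homology in disjoint single degrees, and the initial map because $\HH{a+1}{F} = 0$. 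Attach a final ghost arrow $N[b-1] \to (\Omega_1 N)[b]$, the shift by $b-1$ of the connecting morphism of the short exact sequence $0 \to \Omega_1 N \to Q_0 \xrightarrow{\pi} N \to 0$ with $\pi$ a projective cover. This gives a sequence of $b-a$ ghost maps starting at $F$.

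The main obstacle is verifying that the composition $\Phi \colon F \to (\Omega_1 N)[b]$ is not null-homotopic; since $\hh{\Phi} = 0$, the argument must take place at the level of morphisms in $\dcat R$. By the defining triangle of the connecting morphism, $\Phi \simeq 0$ in $\dcat R$ if and only if the preceding composition $\phi \colon F \to N[b-1]$ factors through $Q_0[b-1]$. A standard computation identifies $\Hom{\dcat R}{F}{M[b-1]}$, via chain maps $P \to M[b-1]$ and their homotopies, with $\Hom{R}{N}{M}/\{h \circ \bar\partial_{b-1} : h \in \Hom{R}{P_{b-2}}{M}\}$, where $\bar\partial_{b-1} \colon N \to P_{b-2}$ is the map induced by $\partial_{b-1}^P$; in this presentation the class of $\phi$ is represented by $\idmap_N$. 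A factorization through $Q_0[b-1]$ would therefore yield module maps $g \colon N \to Q_0$ and $h \colon P_{b-2} \to N$ satisfying $\pi \circ g + h \circ \bar\partial_{b-1} = \idmap_N$; the column $(g, \bar\partial_{b-1}) \colon N \to Q_0 \oplus P_{b-2}$ is then split-injective, exhibiting $N$ as a direct summand of the projective module $Q_0 \oplus P_{b-2}$---contradicting the hypothesis. The Ghost Lemma then delivers $\level{\PP}{F} \gs b-a+1$.
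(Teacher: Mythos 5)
Your proposal is correct, and at its core it follows the same strategy as the paper: apply the Ghost Lemma to a chain of $b-a$ ghost maps built from truncations of (a projective resolution of) $F$ along the homology gap, and derive the non-triviality of the composition from the hypothesis that $N=\HH{0}{\Omega_{b-1}^R(F)}$ is not projective. The differences are in the bookkeeping rather than the key idea. The paper forms the complex $G=(\BB_{b-1}(F)\to F_{b-1}\to\cdots)$ (so $\HH{i}{G}=0$ for all $i>a$), takes the $b-a$ \emph{hard} truncations $G\twoheadrightarrow G_{\gs a+1}\twoheadrightarrow\cdots\twoheadrightarrow G_{\gs b}$, and shows by an explicit chain-level homotopy computation that a null-homotopy of $F\to G_{\gs b}$ would split $0\to \BB_{b-1}(F)\to F_{b-1}\to N\to 0$, making $N$ projective. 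You instead use $b-a-1$ \emph{soft} truncations $Y_k$ of $P$ and append the connecting morphism $N[b-1]\to(\Omega_1 N)[b]$ of a projective cover of $N$; since $G_{\gs k}$ is quasi-isomorphic to your $Y_k$ and $G_{\gs b}=\BB_{b-1}(F)[b]$ is itself a first syzygy of $N$, your ladder is essentially the paper's, shifted and viewed through the distinguished triangle. Your non-vanishing check is phrased in $\dcat R$ via the long exact $\Hom$ sequence and the cocycle presentation of $\Hom{\dcat R}{F}{M[b-1]}$; this is more machinery than the paper's direct homotopy argument, but arrives at the same splitting, $N$ a retract of $Q_0\oplus P_{b-2}$. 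One genuine (though minor) structural difference: because $Y_{a+1}$ is only defined when $a+1\ls b-1$, your argument requires a separate base case $b-a=1$, whereas the paper's hard-truncation ladder handles all $b-a\gs1$ uniformly. Your base case is fine as written if one uses the convention of \cite{ABIM} (a nonzero retract of a bounded complex of projectives with zero differentials has level $1$, and level $0$ means the zero complex), which is the convention the paper effectively uses in Corollary \ref{pd}; it may be worth a sentence making explicit that $\level{\PP}{F}\ls 1$ forces every $\HH{i}{F}$, and hence $N$, to be projective. Finally, your opening observation that a classical ghost map is automatically $\PP$-ghost is a useful clarification that the paper leaves implicit when it invokes Lemma \ref{ghost} (stated for $\level{R}$) to bound $\level{\PP}$.
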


\begin{proof}
Set $\BB_i(F):=\operatorname{Im}(\partial_{i+1}^F)$ for each $i$, and consider the complex
\[G : = \xymatrix{0 \ar[r] & \BB_{b-1}(F) \ar[r] & F_{b-1} \ar[r] & F_{b-2} \ar[r] & \cdots}.\]
Since $\BB_{b-1}(F)\lra F_{b-1}$ is injective and $\HH{i}G=\HH{i}F$ for all $a<i<b$, we have $\HH{i}G=0$ for all $i>a$.  Therefore each map in the following sequence
\[\xymatrix{G \ar@{->>}[r]  & G_{\gs a+1} \ar@{->>}[r] & G_{\gs a + 2} \ar@{->>}[r] & \cdots \ar@{->>}[r] & G_{\gs b}}\]
 is ghost.
   
By the Ghost Lemma (Lemma \ref{ghost}), it suffices to show that the composed map $\varphi\colon F \lra G \twoheadrightarrow G_{\gs b}$ given by
\[\xymatrix{\cdots \ar[r] & F_{b+1} \ar[r] \ar[d] & F_b \ar@{->>}[d]_-{\varphi_b} \ar[r] & F_{b-1} \ar[r] \ar[d] & \cdots \\ \cdots \ar[r] & 0 \ar[r] & \BB_{b-1}(F)  \ar[r] & 0 \ar[r] & \cdots}\]
 is not null-homotopic.  Assume by way of contradiction that there is a homotopy $\alpha$ from $\varphi$ to 0.  Then the following diagram commutes:
\[\xymatrix{F_b \ar[d]_-{\varphi_b} \ar[r]^{\partial^F_{b}} & F_{b-1} \ar[ld]^-{\alpha_{b-1}} \\ \BB_{b-1}(F)}\]
Since  $\partial^F_b\colon F_b \longrightarrow F_{b-1}$ factors through the inclusion $\BB_{b-1}(F)\lra F_{b-1}$ the existence of such a homotopy $\alpha$ would produce the splitting 
\[\xymatrix{0 \ar[r] & \BB_{b-1}(F) \ar[r] & F_{b-1} \ar[r]  \ar@{-->}@/_1pc/[l]_{\alpha} & F_{b-1}/\BB_{b-1}(F) \ar[r] & 0,}\]
implying $F_{b-1}/\BB_{b-1}(F)$ is projective.  However, 
\[F_{b-1}/\BB_{b-1}(F) = \HH{0}{ \Omega_{b-1}^R(F)}\]
 thus the existence of  $\alpha$ contradicts our assumption that $\HH{0}{ \Omega_{b-1}^R(F)}$ is not projective, proving the claim.
\end{proof}

We can recover the following well known result.  

\begin{corollary}[{\cite[4.5 and 4.6]{ghost1}}]\label{pd}
If $M$ is an $R$-module, then
\[\level{\PP}{M} = \pd (M)+1.\]
\end{corollary}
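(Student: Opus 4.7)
The plan is to establish both inequalities by reducing to Theorem \ref{th:gaps}. For the upper bound $\level{\PP}{M} \ls \pd(M) + 1$, when $\pd(M) = d$ is finite $M$ admits a projective resolution $P$ concentrated in degrees $[0, d]$. Since quasi-isomorphic complexes have equal projective levels, $\level{\PP}{M} = \level{\PP}{P}$, and Lemma \ref{lemma1}(1) bounds this by $d + 1$. The inequality is trivial when $\pd(M) = \infty$.

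For the lower bound, I would apply Theorem \ref{th:gaps} with $F = M$, viewed as a complex concentrated in degree $0$. Since $\HH{i}{M} = 0$ for every $i \neq 0$, choosing $a = 0$ and any $b \gs 1$ makes the homology-vanishing hypothesis automatic (the interval $(0,b)$ contains only positive integers), and the remaining condition reduces to the classical $(b-1)$-th syzygy module $\HH{0}{\Omega_{b-1}^R(M)}$ being non-projective. If $\pd(M) = d$ with $1 \ls d < \infty$, I would take $b = d$: the $(d-1)$-th syzygy module has projective dimension exactly $1$ and hence is not projective, so Theorem \ref{th:gaps} delivers $\level{\PP}{M} \gs d + 1$, matching the upper bound. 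If $\pd(M) = \infty$, no syzygy of $M$ is projective, and applying the theorem for every $b \gs 1$ forces $\level{\PP}{M} = \infty$. The edge case $\pd(M) = 0$ (so $M$ is a nonzero projective) is immediate: the upper bound gives $\level{\PP}{M} \ls 1$, and $\level{\PP}{M} \gs 1$ since $M$ is not quasi-isomorphic to $0$.

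The argument is essentially a direct reduction to Theorem \ref{th:gaps}; the only item demanding any real care is the identification of $\HH{0}{\Omega_{b-1}^R(M)}$ with the usual $(b-1)$-th syzygy module of $M$ (which uses that $P_{\gs b-1}$ is a projective resolution of that syzygy up to a shift), together with the bookkeeping for the boundary cases $\pd(M) \in \{0, \infty\}$. No new ideas beyond Theorem \ref{th:gaps} and Lemma \ref{lemma1} are needed.
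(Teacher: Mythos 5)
Your proof is correct and follows essentially the same route as the paper: the upper bound comes from Lemma \ref{lemma1}(1), and the lower bound from Theorem \ref{th:gaps} applied with $a=0$ and $b=\pd(M)$, using that the $(b-1)$th syzygy of $M$ is not projective. Your version is slightly more careful in spelling out the edge cases $\pd(M)\in\{0,\infty\}$ and the identification $\HH{0}{\Omega_{b-1}^R(M)}\cong\Omega_{b-1}M$, which the paper treats tersely.
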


\begin{proof}
By Lemma \ref{lemma1} (1), we know that $\level{\PP} M\ls \pd M+1$.  Now suppose $\pd M\gs n$.  Let $F$ be a projective resolution of $M$.  By our assumption on $n$, 
\[\HH{0}{\Omega_n F}\cong \Omega_{n-1} M\]
is not projective, hence $\level{\PP}F\gs n+1$.  It follows that 
\[\level{\PP}F\gs \pd M+1.\]  
\end{proof}

\begin{corollary}\label{everyn}
Assume $R$ has infinite global dimension. Then for every $n \gs0$ there exists a complex $F$ with $\level{\PP}{F} = n+1$. 
\end{corollary}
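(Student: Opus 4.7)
The plan is to realize every value $n+1$ of $\level{\PP}{\cdot}$ by applying Theorem \ref{th:gaps} to a hard truncation of a projective resolution of a module of infinite projective dimension. Since $\gldim R = \infty$, I fix an $R$-module $M$ with $\pd M = \infty$. For $n = 0$, the complex $F = R$ already satisfies $\level{\PP}{F} = 1$ by Corollary \ref{pd}.

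For $n \gs 1$, I would let $P$ be a projective resolution of $M$ and define
\[F := 0 \longrightarrow P_n \longrightarrow P_{n-1} \longrightarrow \cdots \longrightarrow P_0 \longrightarrow 0,\]
viewed as a complex concentrated in homological degrees $0$ through $n$. The upper bound $\level{\PP}{F} \ls n+1$ is immediate from Lemma \ref{lemma1}(1), since $F$ is a bounded complex of projective modules spanning $n+1$ consecutive degrees. For the matching lower bound I plan to invoke Theorem \ref{th:gaps} with $a = 0$ and $b = n$: the exactness of $P$ in positive degrees forces $\HH{i}{F} = 0$ for $0 < i < n$, so the vanishing hypothesis is in place.

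The one point that needs checking is that $\HH{0}{\Omega_{n-1}^R(F)}$ is not projective. The crucial observation is that, since $F$ is a bounded below complex of projective modules, $F$ is its own projective resolution, so by the definition of syzygy I have $\Omega_{n-1}^R(F) = \shift^{-(n-1)} F_{\gs n-1}$ and hence
\[\HH{0}{\Omega_{n-1}^R(F)} \cong P_{n-1}/\im(\partial_n^P);\]
by exactness of $P$ at $P_{n-1}$ this is the $(n-1)$-th syzygy of $M$ in the classical sense. If this syzygy were projective, then $\pd M \ls n - 1$, contradicting the choice of $M$. Theorem \ref{th:gaps} then yields $\level{\PP}{F} \gs n+1$, matching the upper bound. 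No real obstacle arises; the main subtlety is simply recognizing that $F$ serves as a projective resolution of itself, which makes the syzygy computation transparent and reduces the lower bound to the standard characterization of projective dimension in terms of non-projectivity of syzygies.
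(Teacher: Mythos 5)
Your argument is correct and follows the paper's proof essentially verbatim: take $F = P_{\ls n}$ for a projective resolution $P$ of a module of large projective dimension, get the upper bound from Lemma \ref{lemma1}(1), and get the lower bound from Theorem \ref{th:gaps} after identifying $\HH{0}{\Omega_{n-1}^R(F)}$ with the classical $(n-1)$st syzygy of $M$. The only cosmetic difference is that you fix one $M$ with $\pd M = \infty$ for all $n$ (which exists, e.g.\ as a direct sum of modules of unbounded projective dimension), whereas the paper chooses $M$ with $\pd M \gs n$ depending on $n$; both are immediate from $\gldim R = \infty$.
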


\begin{proof}
When $n=0$, we can take $F=R$. When $n\gs 1$, let $M$ be an $R$-module such that $\pd(M)\gs n$. Let $P$ be a projective resolution of $M$, and set $F=P_{\ls n}$. By Lemma \ref{lemma1} (1), $\level{R}{F}\leqslant n+1$. From $\HH{0}{\Omega_{n-1}^R(F) }=\Omega_{n-1} M$ and $\pd(M)\gs n$, we see that $\HH{0}{\Omega_{n-1}^R(F) }$ is not projective.  Since $\HH{i}{F}=0$ for $0<i<n$, Theorem \ref{th:gaps} gives the desired equality.  
\end{proof}

The following result will be used several times in Section \ref{sec:Koszul}.

\begin{proposition}\label{gapsmap}
Let $F$ be a complex of $R$-modules and $I\subseteq R$ a left ideal such that $\Image(\partial^F_i)\subseteq IF_{i-1}$ for every $i$. Assume there exists a chain map $\eta\colon  F \longrightarrow G$ where $G$ is a complex such that $\HH{i}{G} = 0$ for all $a < i < b$, and 
\[\Image(\eta_b) \not\subseteq I G_b +\ZZ_b(G).\]
 Then
\[\level{\PP}{F} \gs b-a+1.\]
\end{proposition}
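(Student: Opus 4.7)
The plan is to adapt the ghost-sequence argument of Theorem \ref{th:gaps} by running it on the target complex $G$ instead of on the source $F$, and then using $\eta$ to transport the resulting non null-homotopy back to $F$. First I would modify $G$ in degrees $\gs b$ to kill its homology there: set
\[G' := \cdots \to 0 \to \BB_{b-1}(G) \to G_{b-1} \to G_{b-2} \to \cdots,\]
with $\BB_{b-1}(G)$ placed in homological degree $b$ and mapping into $G_{b-1}$ via the inclusion. Injectivity of that inclusion together with the hypothesis $\HH{i}{G}=0$ for $a<i<b$ gives $\HH{i}{G'}=0$ for every $i>a$, so exactly as in Theorem \ref{th:gaps} each hard-truncation map in
\[G' \twoheadrightarrow G'_{\gs a+1} \twoheadrightarrow G'_{\gs a+2} \twoheadrightarrow \cdots \twoheadrightarrow G'_{\gs b}\]
is ghost, and the final term $G'_{\gs b}$ is $\BB_{b-1}(G)$ concentrated in degree $b$ with zero differential. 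This is a sequence of $b-a$ ghost maps.

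Next I would lift $\eta$ to a chain map $\eta'\colon F\to G'$: set $\eta'_i:=\eta_i$ for $i<b$, set $\eta'_i:=0$ for $i>b$, and define $\eta'_b\colon F_b\to \BB_{b-1}(G)$ to be the corestriction of $\partial^G_b\eta_b$ to its image $\BB_{b-1}(G)\hookrightarrow G_{b-1}$. The chain-map identity $\partial^G_b\eta_b=\eta_{b-1}\partial^F_b$ together with $\partial^G_b\partial^G_{b+1}=0$ make $\eta'$ a chain map. By the Ghost Lemma it then suffices to show that the composition $\psi\colon F\xra{\eta'} G'\twoheadrightarrow G'_{\gs b}$, which is $\eta'_b$ in degree $b$ and zero elsewhere, is not null-homotopic.

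This last step is the main obstacle and is where both hypotheses enter. Because $G'_{\gs b}$ is concentrated in a single degree and has zero differential, a null-homotopy of $\psi$ collapses to a single $R$-linear map $\alpha_{b-1}\colon F_{b-1}\to \BB_{b-1}(G)$ satisfying $\partial^G_b\eta_b=\alpha_{b-1}\partial^F_b$. For any $x\in F_b$, I would use $\Image(\partial^F_b)\subseteq IF_{b-1}$ to write $\partial^F_b(x)=\sum_j r_j y_j$ with $r_j\in I$, and choose $z_j\in G_b$ with $\partial^G_b(z_j)=\alpha_{b-1}(y_j)$; $R$-linearity of $\alpha_{b-1}$ then gives
\[\partial^G_b(\eta_b(x))=\sum_j r_j\partial^G_b(z_j)=\partial^G_b\Big(\sum_j r_j z_j\Big),\]
with $\sum_j r_jz_j\in IG_b$. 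Hence $\eta_b(x)\in IG_b+\ZZ_b(G)$ for every $x\in F_b$, contradicting $\Image(\eta_b)\not\subseteq IG_b+\ZZ_b(G)$. The Ghost Lemma then yields $\level{\PP}{F}\gs b-a+1$.
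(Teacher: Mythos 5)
Your proof is correct and takes essentially the same approach as the paper: modify $G$ above degree $b$ to kill its homology there, run the sequence of ghost truncation maps, apply the Ghost Lemma, and rule out a null-homotopy by pushing $\Image(\partial^F_b)\subseteq IF_{b-1}$ through the homotopy to conclude $\Image(\eta_b)\subseteq IG_b+\ZZ_b(G)$, contradicting the hypothesis. The only cosmetic difference is that the paper places $G_b/\ZZ_b(G)$ rather than the isomorphic module $\BB_{b-1}(G)$ in degree $b$ of the modified complex, with $\eta'$ given as the composite of $\eta$ with the evident quotient map $G\to G'$.
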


\begin{proof}
The proof is similar to that of Theorem \ref{th:gaps}. 
Consider the complex 
\[G' : = \xymatrix{0 \ar[r] & G_{b}/ \ZZ_b(G) \ar[r] & G_{b-1} \ar[r] & \cdots}\]
and let $\eta' \colon F \longrightarrow G'$ be the composition of $\eta$ and the natural chain map from $G$ to $G'$. Since $\HH{i}{G'}=0$ for all $i>a$, the $b-a$ maps in the sequence 
\[\xymatrix{G' \ar@{->>}[r] & G'_{\gs a+1} \ar@{->>}[r] & G'_{\gs a+2} \ar@{->>}[r] & \cdots \ar@{->>}[r] & G'_{\gs b}}\]
are ghost.  Let $\varphi\colon F\lra G'_{\gs b}$ be the composition of these maps.  The existence of a homotopy $\alpha$ from $\varphi$ to 0 would imply
\[\im(\eta_b)=\im(\varphi_b)=\alpha_{b-1}\partial_b^{F}(F_b)\subseteq IG'_b.\]
Since $\im(\partial^F_i)\subseteq IF_{i-1}$ for every $i$, the existence of a chain homotopy from $\varphi$ to the zero map implies $\Image(\varphi_b)=\Image(\eta'_b)\subseteq IG'_b$.  But this implies that $\Image(\eta_b)$ is contained in $IG_b +\ZZ_b(G)$, contradicting the assumptions. Therefore, $\varphi$ is not null-homotopic and the result follows from Lemma \ref{ghost}, the Ghost Lemma.  
\end{proof}

\begin{remark}\label{tacos}
Since $\level{\PP}{F}\ls \level{R}{F} $, projective level can be replaced with $R$-level in the conclusions of Theorem \ref{th:gaps} and Proposition \ref{gapsmap}.  Therefore, using finitely generated modules and the full strength of Lemma \ref{lemma1} (1), a similar replacement works for Corollary \ref{pd} and  Corollary \ref{everyn} when $R$ is left Noetherian.
\end{remark}

\section{The Improved New Intersection Theorem for levels}\label{sec:Inter}

Let $R$ be a commutative Noetherian local ring with maximal ideal $\fm$. The results in this section use the existence of balanced big Cohen-Macaulay algebras.  Recall that an $R$-algebra $S$ is said to be a {\it balanced big Cohen-Macaulay algebra} if every system of parameters for $R$ forms an $S$-regular sequence. Until recently it was only known that balanced big Cohen-Macaulay algebras exist when $R$ is equicharacteristic or $\dim R\ls 3$: see  \cite[Chapter 8]{BH}, \cite[Theorem 8.1]{HH2}, \cite[Theorem 1]{HH}, and \cite{H2}.  However, in his recent work on the Direct Summand Conjecture,  Andr\'{e} shows in \cite{A} that they exist for all rings.

\begin{theorem}\label{NewInterThm}
Let
\[
\xymatrix{F := 0 \ar[r] & F_{n} \ar[r] & \cdots \ar[r] & F_{0} \ar[r] & 0}
\]
be a complex of finitely generated free $R$-modules such that $\HH{0}{F}\ne 0$ and $\HH{i}{F}$ has finite length for every $i \gs 1$. If $I$ is an ideal that annihilates a minimal generator of $\HH{0}{F}$, there is an inequality
\[
\level{R}{F} \gs \dim(R) - \dim(R/I) + 1.
\]
\end{theorem}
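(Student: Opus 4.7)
The plan is to apply the Ghost Lemma (Lemma \ref{ghost}) after base change to a balanced big Cohen--Macaulay algebra, with the Koszul complex furnishing the required sequence of ghost maps. Set $d = \dim R - \dim R/I$. Since $\dim R/I = \dim R - d$, a system of parameters of $R/I$ lifts to $R$ and can be completed to a system of parameters of $R$ by adjoining elements $\bsx = x_1, \ldots, x_d \in I$. By \cite{A}, $R$ admits a balanced big Cohen--Macaulay algebra $B$, and then $\bsx$ is $B$-regular. Since $F$ is a bounded complex of projective $R$-modules and hence K-projective, the base-change functor $- \otimes_R B$ preserves the quasi-isomorphism class of $F$ and sends $R$ to $B$. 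Lemma \ref{lemma1}(2) yields $\level{B}{F \otimes_R B} \leq \level{R}{F}$, so it suffices to prove $\level{B}{F \otimes_R B} \geq d + 1$.

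To produce a target ghost sequence, let $K := K(\bsx; B)$ be the Koszul complex, a free $B$-resolution of $B/\bsx B$ concentrated in degrees $[0,d]$. Its hard truncations assemble into
\[
K \twoheadrightarrow K_{\gs 1} \twoheadrightarrow \cdots \twoheadrightarrow K_{\gs d} = B[d],
\]
a sequence of $d$ ghost chain maps whose composition represents the Koszul fundamental class, a generator of $\Ext{d}{B}{B/\bsx B}{B} \cong B/\bsx B$, and is therefore not null-homotopic. The plan is to construct a chain map $\eta\colon F \otimes_R B \to K$ such that its composition with this truncation sequence is not null-homotopic; the Ghost Lemma then delivers $\level{B}{F \otimes_R B} \geq d + 1$.

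To build $\eta$, note that because $u$ is a minimal generator of $\HH{0}{F}$, it admits a lift $\tilde u \in F_0 \setminus \fm F_0$, so there is a $B$-linear map $\eta_0 \colon F_0 \otimes_R B \to K_0 = B$ with $\eta_0(\tilde u \otimes 1) = 1$. Since $\bsx \subseteq I \subseteq \Ann(u)$, we have $x_j \tilde u \in \partial_1^F(F_1)$ for each $j$, which provides compatibility with the Koszul differential at degree $1$. Higher components $\eta_i$ are built by induction on degree, using the exactness of $K$ in positive degrees together with the freeness of each $F_i \otimes_R B$ over $B$. The composite $F \otimes_R B \to B[d]$ then carries the cycle $\tilde u \otimes 1$ to a generator of $B/\bsx B$ that is non-zero modulo $\fm B$, which forces the composite to be non-null-homotopic.

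The main obstacle is the inductive extension of $\eta$ to degrees $\gs 2$: at each stage the obstruction to lifting lives in the higher homology of $F \otimes_R B$, which is not a priori zero. However, the finite-length hypothesis on $\HH{i}{F}$ for $i \gs 1$ guarantees these obstructions are killed by a power of $\fm$, and the $B$-regularity of $\bsx$ lets them be absorbed, possibly after replacing $\bsx$ by suitable powers that still annihilate $u$. Alternatively, one can bypass the chain-level bookkeeping and work directly in $D(B)$, producing a non-zero element of $\Hom{D(B)}{F \otimes_R B}{B[d]} = \Ext{d}{B}{F \otimes_R B}{B}$ by pulling back the Koszul fundamental class along the $B$-module map $B/\bsx B \to \HH{0}{F \otimes_R B}$ determined by $\bar u = u \otimes 1$, and then exhibiting it as a composition of $d$ ghosts via the truncations of $K$.
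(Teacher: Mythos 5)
Your proof takes a genuinely different route from the paper's, and there is a real gap in it. Because $K := K(\bsx;B)$ is a free resolution of $B/\bsx B$ and $F\otimes_R B$ is a bounded-below complex of projective $B$-modules, the comparison theorem gives $[F\otimes_R B, K] \cong \Hom{B}{\HH 0{F\otimes_R B}}{B/\bsx B} \cong \Hom{R}{\HH 0F}{B/\bsx B}$. So the homotopy class of any chain map $\eta$ you could build is completely determined by an $R$-linear map $\HH 0F\to B/\bsx B$, and for your argument you need one that sends $u$ to a unit (or at least a nonzero element) of $B/\bsx B$; otherwise $\eta$ itself, and hence the composite to $K_{\gs d}$, is null-homotopic. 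Such a map need not exist. For instance, take $R=k[[x,y]]$, $M=R/(x,y^2)\oplus R/(y)$, $u=(1,0)$, $I=(x)$, and let $F$ be the minimal free resolution of $M$ (so $n=2$, $\HH{i}{F}=0$ for $i\gs1$, and the theorem asserts $\level{R}{F}\gs 2$). Here $B=R$, $\bsx$ is a single parameter in $(x)$, and one checks $\Hom{R}{M}{R/(\bsx)}=0$ for every such choice of $\bsx$, because $y^2$ annihilates $u$ but is a non-zerodivisor on $R/(\bsx)$ after clearing the $y$-part of $\bsx$. Thus every chain map $F\otimes_R B\to K$ is null-homotopic and your ghost sequence produces nothing. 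Two smaller points reinforce that the construction was not pinned down: defining $\eta_0$ on the free module $F_0\otimes_R B$ with $\eta_0(\tilde u\otimes 1)=1$ does not by itself make $\eta_0(\BB_0(F\otimes_R B))\subseteq\bsx B$ (verifying this only on elements mapping to $x_j\tilde u$ is not enough), and the obstructions you worry about in degrees $\gs 2$ actually vanish automatically because $K$ is exact in positive degrees — the real difficulty is entirely at degree $1$, i.e.\ in descending $\eta_0$ to $\HH 0$.

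The paper sidesteps all of this by not aiming at the Koszul complex at all. It sets $G=F\otimes_R S$ for a balanced big Cohen--Macaulay algebra $S$, uses Iyengar's depth argument to bound $s:=\sup\{i\mid \HH iG\ne0\}$ by $n-\dim R+\dim R/I$, then shows directly that $\HH 0{\Omega^S_{n-1}(G)}$ is not projective — the key is that $\fm S\ne S$ together with minimality of $F$ gives a nonzero $\Tor 1S{S/\fn}{\Omega}$ — and invokes Theorem~\ref{th:gaps} to conclude $\level{S}{G}\gs n-s+1$. That argument requires no choice of parameters in $I$, no chain map into a resolution of $B/\bsx B$, and no analogue of the canonical element/monomial-type nonvanishing you would ultimately need; it replaces all of that with a single non-projectivity check on a syzygy of $G$. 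If you want to rescue the Koszul-target idea you would have to first pass through an intermediate complex on which a nonvanishing map actually exists, which is essentially what the paper does via $G$ and Theorem~\ref{th:gaps}.
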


\begin{proof}
It suffices to consider the case when $\dim(R)-\dim(R/I)\gs 1$. Also, replacing $F$ by its minimal free resolution, we  can assume that $F$ is minimal and that $F_n\ne 0$.  By \cite[0.7.1.]{A}, there exists a balanced big Cohen-Macaulay $R$-algebra $S$.
Set $G:=F\otimes_R S$ and 
\[
s:= \sup\{i\mid \HH{i}{G}\neq 0\}\,.
\]
Now, the proof of \cite[3.1]{Iyengar} yields an inequality
\begin{equation}
\label{eqnNIT}
s\ls n - \dim (R) + \dim (R/I)\,.
\end{equation}
In particular, $s\ls n-1$.

Set $\Omega:=\HH{0}{\Omega_{n-1}^S(G)}$.  We claim that the $S$-module $\Omega$ is not projective.  Indeed, since $s\ls n-1$ the complex $0\lra G_{n}\lra G_{n-1}\lra 0$, with $G_{n-1}$ in degree $0$, is a free resolution of $\Omega$. Since $S$ is a big Cohen-Macaulay algebra we have $\fm S\ne S$ where $\fm$ is the maximal ideal of $R$. Hence, there exists a maximal ideal $\fn$ of $S$ containing $\fm S$. By the minimality of $F$, we have   
\[
\Tor 1S{S/\fn}{\Omega} \cong (S/\fn)\otimes_S G_{n} \cong (S/\fn)\otimes_R F_n\ne 0.
\]
This implies that $\Omega$ is not flat, which justifies the claim.

Given the preceding claim, Theorem~\ref{th:gaps} and Remark \ref{tacos} yield the second inequality below:
\[
 \level{R}F \gs \level{S}{G} \gs n - s +1.
\]
The first inequality is Lemma \ref{lemma1} (2) applied to the functor $f=-\otimes_R S$. It remains to recall \eqref{eqnNIT}.
\end{proof}

This result can be used to recover a special case of the New Intersection Theorem in \cite{ABIM}. Recall that
\[\superheight{I}=\sup\{\hht{IT}\mid T\mbox{ is a Noetherian }R\mbox{-algebra}\}.\]

\begin{theorem}[{\cite[5.1]{ABIM}}]\label{NITDG}
Let $F$ be a perfect complex and let $I\subset R$ be the annihilator of $\bigoplus_{i\in\mathbb{Z}}\HH{i}F$.  Then we have
\[\level{R}{F}\gs \superheight{I}+1.\]
\end{theorem}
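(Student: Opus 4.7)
The plan is to reduce Theorem~\ref{NITDG} to the Improved New Intersection Theorem for levels (Theorem~\ref{NewInterThm}) by tensoring $F$ up along a well-chosen local Noetherian $R$-algebra. Fix a commutative Noetherian $R$-algebra $T$; since $\superheight{I} = \sup_{T}\hht{IT}$, it suffices to establish $\level{R}{F} \gs \hht{IT} + 1$ for each such $T$ with $IT \neq T$ (the inequality being vacuous otherwise). Choose a prime $\fq \in \Spec{T}$ minimal over $IT$ with $\hht{\fq} = \hht{IT}$, and set $A := T_{\fq}$, a Noetherian local ring with $\dim A = \hht{IT}$. After replacing $F$ by a quasi-isomorphic bounded complex of finitely generated free $R$-modules (possible since $F$ is perfect), Lemma~\ref{lemma1}(2) applied to $-\otimes_{R}A$ gives $\level{A}{G} \ls \level{R}{F}$ for $G := F\otimes_{R}A$.

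Next I verify that an appropriate shift of $G$ meets the hypotheses of Theorem~\ref{NewInterThm}. Because $F$ is perfect and $I$ annihilates every $\HH{i}{F}$, a standard argument using Noetherianity shows that some power $I^{N}$ annihilates the identity endomorphism of $F$ in $\dcat{R}$; hence $I^{N}A$ annihilates each $\HH{i}{G}$. Since $\fq$ is minimal over $IT$, the ideal $IA$ is primary to the maximal ideal $\fq A$, so each $\HH{i}{G}$ has finite length. Moreover $G$ is not acyclic: the support of a perfect complex equals $V(I)$ up to radical, and the closed point of $\Spec{A}$ contracts to a prime of $R$ containing $I$, so it lies in that support. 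Let $s := \sup\{i\mid \HH{i}{G}\neq 0\}\gs 0$. A minimal free resolution $G' \to \shift^{-s}G$ can be taken to be a bounded complex of finitely generated free $A$-modules concentrated in degrees $[0,n']$ because $\shift^{-s}G$ is perfect with homology in non-positive degrees. Then $\HH{0}{G'} = \HH{s}{G} \neq 0$, $\HH{i}{G'} = 0$ for $i \gs 1$, and $\level{A}{G'} = \level{A}{G}$ by invariance of level under shifts and quasi-isomorphisms.

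Finally I apply Theorem~\ref{NewInterThm} to $G'$ with the ideal $J := I^{N}A$. Since $J$ annihilates all of $\HH{0}{G'}$ (hence any minimal generator) and $\dim(A/J) = 0$ (because $\sqrt{J} = \fq A$), the theorem gives
\[
\level{A}{G'}\gs \dim A - \dim(A/J) + 1 = \hht{IT} + 1.
\]
Chaining $\level{R}{F}\gs \level{A}{G} = \level{A}{G'}$ and taking supremum over $T$ yields the result. The main obstacle is the twofold verification in the middle paragraph: showing that tensoring to $A$ neither produces an acyclic complex nor a module of infinite length in any homological degree. Both rest on the description of the support of a perfect complex via $\sqrt{I}$ and on the uniform-power statement $I^{N}\cdot \idmap_{F} = 0$ in $\dcat{R}$; once these are in hand, the shift by $s$ to bring the top non-vanishing homology into degree $0$ is routine.
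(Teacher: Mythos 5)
Your strategy mirrors the paper's almost exactly: localize a Noetherian $R$-algebra $T$ at a prime minimal over $IT$, tensor $F$ up, observe the homology becomes finite length, and apply Theorem~\ref{NewInterThm} together with Lemma~\ref{lemma1}(2). You also add welcome detail the paper leaves implicit (the uniform power $I^N$ killing $\idmap_F$ in the derived category, the non-acyclicity of $G$ via support, and the renormalization step). However, there is a genuine slip in the renormalization: you set $s := \sup\{i\mid \HH i G\neq 0\}$ and claim that a minimal free resolution $G'$ of $\shift^{-s}G$ can be taken concentrated in degrees $[0,n']$ ``because $\shift^{-s}G$ has homology in non-positive degrees.'' That reasoning is backwards. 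A bounded complex of free modules sitting in degrees $[0,n']$ necessarily has homology only in degrees $\gs 0$; since $\shift^{-s}G$ generally has nonzero homology in strictly negative degrees, no quasi-isomorphic replacement of it can be concentrated in nonnegative degrees. Your subsequent assertion that $\HH i G'=0$ for $i\gs 1$ is consistent with $s=\sup$, but then the degree normalization fails.

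The fix is to take $s := \inf\{i\mid \HH i G\neq 0\}\gs 0$ instead. Then $\shift^{-s}G$ has homology concentrated in degrees $\gs 0$, so (e.g.\ via the canonical truncation $\tau_{\gs 0}$ followed by a bounded free resolution over the Noetherian local ring $A$, or equivalently by taking the minimal free resolution of $G$ over $A$ and shifting its bottom degree to $0$) one obtains a bounded complex $G'$ of finitely generated free $A$-modules in degrees $[0,n']$ with $\HH 0 G' = \HH s G\neq 0$. The higher homology $\HH i G'$ for $i\gs 1$ is then not zero in general, but it does have finite length since $J:=I^N A$ annihilates it and $\sqrt J=\fq A$, which is exactly what Theorem~\ref{NewInterThm} requires. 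With this correction, the remainder of your argument, in particular the computation $\dim A-\dim(A/J)+1=\hht{IT}+1$ and the chain $\level{R}{F}\gs\level A G=\level A{G'}$, is sound and reproduces the paper's proof.
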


\begin{proof}
Let $R\lra T$ be a Noetherian $R$-algebra, and let $\p\in\Spec{T}$ be minimal over $IT$.  
The ideal $IT_\p$ is the annihilator of $\oplus_{i\in\mathbb{Z}}\HH{i}{F\otimes T_\p}$, hence $\HH{i}{F\otimes T_\p}$ has finite length for every $i$. From Lemma \ref{lemma1} (2)  and Theorem \ref{NewInterThm} we obtain
\[\level{R}{F}\gs \level{T_\p}{T_\p\otimes_R F}\gs \dim T_\p-\dim T_\p/IT_\p+1=\hht{\p}+1.\]
The result follows.  
\end{proof}

\begin{remark}

By Lemma \ref{lemma1} (1), this result recovers the classical form of the New Intersection Theorem.

\end{remark}

\section{Koszul complexes over local rings}\label{sec:Koszul}

Let $(R, \fm, k)$ be a commutative Noetherian local ring. As usual, $\dim(R)$ will denote the Krull dimension of $R$, $\beta(I)$ will be the minimal number of generators of an ideal $I\subseteq R$, and $\edim(R)$ will be $\beta(\fm)$.  Given an ideal $I$ in a ring $R$, we let $K(I)$ denote the Koszul complex on some set of generators of $I$, which we will show to be independent of the choice of generators. The goal of this section is to find estimates for the $R$-level of $K(I)$. 

\begin{lemma}
For an ideal $I\subseteq R$, the number $\level{R}{K(I)}$ is well defined, i.e.~ if $x_1,\dots,x_n$ and $y_1,\dots, y_m$ are both generating sets of $I$, then 
\[\level{R}{K(x_1,\dots,x_n)}=\level{R}{K(y_1,\dots,y_m)}\]
where $K(x_1,\dots,x_n)$ is the Koszul complex on $x_1,\dots,x_n$ and $K(y_1,\dots,y_m)$ is similarly defined. 
\end{lemma}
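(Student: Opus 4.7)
The plan is to reduce everything to a single key identity: adjoining a redundant generator to a Koszul complex only adds a suspended copy as a direct summand. Precisely, I would first show that if $z\in(x_1,\dots,x_n)$, then
\[K(x_1,\dots,x_n,z)\simeq K(x_1,\dots,x_n)\oplus \shift K(x_1,\dots,x_n).\]
To prove this, view $K(x_1,\dots,x_n,z)=K(x_1,\dots,x_n)\otimes_R K(z)$ and take the short exact sequence
\[0\lra K(x_1,\dots,x_n)\lra K(x_1,\dots,x_n,z)\lra \shift K(x_1,\dots,x_n)\lra 0.\]
The connecting morphism in the resulting triangle is multiplication by $z$ on $K(x_1,\dots,x_n)$. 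Writing $z=\sum r_i x_i$, the classical contraction operator on the Koszul complex provides an explicit chain homotopy showing this map is null-homotopic, so the triangle splits.

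Next I would record two formal properties of $R$-level that follow directly from the definition: a finite direct sum of complexes has level at most the maximum of their levels (take the direct sum of witnessing filtrations, padding the shorter one with zeros), and suspension preserves level (shift the filtration by one). Combining these with the fact that a direct summand of a direct summand is a direct summand yields
\[\level{R}{K(x_1,\dots,x_n,z)} \;=\; \level{R}{K(x_1,\dots,x_n)\oplus \shift K(x_1,\dots,x_n)} \;=\; \level{R}{K(x_1,\dots,x_n)}.\]

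Finally, given two generating sets $x_1,\dots,x_n$ and $y_1,\dots,y_m$ of $I$, I would compare both to the concatenated Koszul complex $K(x_1,\dots,x_n,y_1,\dots,y_m)$. Since each $y_j$ lies in $(x_1,\dots,x_n,y_1,\dots,y_{j-1})\subseteq I$, adjoining the $y_j$ one at a time and iterating the equality above gives
\[\level{R}{K(x_1,\dots,x_n)} \;=\; \level{R}{K(x_1,\dots,x_n,y_1,\dots,y_m)} \;=\; \level{R}{K(y_1,\dots,y_m)},\]
where the second equality follows by the symmetric argument with the roles of the $x_i$ and $y_j$ reversed. The main obstacle is verifying the key identity, which reduces to the standard fact that multiplication by an element of the ideal is null-homotopic on its Koszul complex; the remaining manipulations with filtrations and direct sums are formal consequences of the definition of $R$-level.
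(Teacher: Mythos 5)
Your proposal is correct and follows essentially the same route as the paper: reduce to adjoining a single redundant generator, observe that the resulting mapping cone is the cone of multiplication by an element of the ideal and hence splits (because that multiplication is null-homotopic on the Koszul complex), obtain the direct sum decomposition $K(x_1,\dots,x_n,z)\simeq K(x_1,\dots,x_n)\oplus\shift K(x_1,\dots,x_n)$, and conclude via the formal behaviour of level under suspension and direct sums. The only cosmetic difference is that you make the null-homotopy explicit via the contraction operator and spell out the filtration manipulations, whereas the paper invokes the null-homotopy of multiplication by the $x_i$ and cites \cite[2.4 (1) and (3)]{ABIM} for the closing step.
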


\begin{proof}
It suffices to show that if $y\in (x_1,\dots, x_n)$ then 
\[\level{R}{K(x_1,\dots,x_n)}=\level{R}{K(x_1,\dots,x_n,y)}.\]
Since $K(x_1,\dots,x_n,y)$ is the mapping cone of the multiplication map
\[\xymatrix{K(x_1,\dots,x_n) \ar[r]^{y} & K(x_1,\dots,x_n)}\]
it is easy to see that multiplication by $x_i$ on $K(x_1,\dots,x_n)$ is homotopic to 0, and thus the same is true for multiplication by $y$ since $y\in (x_1,\dots,x_n)$.  Therefore, $K(x_1,\dots,x_n,y)$ is quasi-isomorphic to 
\[K(x_1,\dots,x_n)\oplus \Sigma K(x_1,\dots,x_n)\]
where $\Sigma$ denotes the shift functor.  Hence, the result follows from \cite[2.4 (1) and (3)]{ABIM}.  
\end{proof} 

\begin{theorem}\label{th:Koszul}\  
\begin{enumerate}
\item[$(1)$] There is an equality 
\[\level{R}{K(\fm)} = \edim(R) + 1.\]
\item[$(2)$] If I is an ideal generated by a system of parameters, then
\[\level{R}{K(I)} = \dim(R) + 1.\]
\item[$(3)$] For any ideal $I\subseteq R$,
\[\level{R}{K(I)}\gs \depth(I,R)\]
where $\depth(I,R)$ is the length of the longest $R$-regular sequence in $I$.  
\item[$(4)$] If $I$ is an ideal generated by a regular sequence and $R$ is equicharacteristic, then for every integer $c\gs 1$ there is an equality 
\[\level{R}{K(I^c)}=\beta(I)+1.\]
\end{enumerate}
\end{theorem}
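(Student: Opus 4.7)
For all four parts the upper bounds follow immediately from Lemma \ref{lemma1}(1): the Koszul complex on $m$ elements is a bounded complex of finitely generated free $R$-modules concentrated in $m+1$ degrees, so its $R$-level is at most $m+1$. I plan to derive each lower bound from Theorem \ref{NewInterThm}, its consequence Theorem \ref{NITDG}, or Proposition \ref{gapsmap}, exploiting the fact that the Koszul differentials on $K(I)$ lie in $I\cdot K(I)$, which makes Proposition \ref{gapsmap} available with the ideal $I$.

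For (2), the minimal generator $\overline 1\in\HH{0}{K(I)}=R/I$ is annihilated by $I$ and $\dim(R/I)=0$, so Theorem \ref{NewInterThm} gives $\level{R}{K(I)}\gs\dim(R)+1$. For (3), I would choose a maximal $R$-regular sequence $x_1,\dots,x_d\in I$ with $d=\depth(I,R)$ and extend it to a generating set $x_1,\dots,x_d,y_1,\dots,y_{m-d}$ of $I$. The factorization $K(I)\cong K(x_1,\dots,x_d)\otimes_R K(y_1,\dots,y_{m-d})$ and the augmentation of the second factor produce a chain map $\eta\colon K(I)\to G:=K(x_1,\dots,x_d)$; since $x_1,\dots,x_d$ is regular, $\HH{i}{G}=0$ for $0<i$, while $G_d=R$ and $\Image(\eta_d)=R\not\subseteq IG_d+\ZZ_d(G)=I$, so Proposition \ref{gapsmap} with $a=0$, $b=d$ yields $\level{R}{K(I)}\gs d+1$, slightly sharpening the stated bound. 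Part (1) reduces to (2) when $R$ is regular, since then $\edim R=\dim R$. For non-regular $R$, the excess embedding-dimension generators require a finer analysis; my plan is to apply Proposition \ref{gapsmap} to $\eta=\idmap\colon K(\fm)\to K(\fm)$ at $b=\edim R$, where $\Image(\eta_e)=R\not\subseteq\fm+(0:\fm)=\fm$, and to supply the missing homology vanishing in $0<i<\edim R$ by passing to a suitably truncated target (for example via the $\fm$-adic filtration on $K(\fm)$) or by base changing to a balanced big Cohen-Macaulay algebra, whose existence in all characteristics is ensured by Andr\'e's theorem.

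Part (4) is the most substantial. Set $n=\beta(I)$. The lower bound follows from Theorem \ref{NITDG}: Koszul homology is annihilated by $I^c$ and $\Ann(R/I^c)=I^c$, so $\Ann\bigoplus_i\HH{i}{K(I^c)}=I^c$; moreover $\superheight{I^c}=\superheight{I}=n$ since $I$ is generated by $n$ elements and $\height(I)=n$. Hence $\level{R}{K(I^c)}\gs n+1$. The upper bound $\level{R}{K(I^c)}\ls n+1$ is the central obstacle, since $I^c$ is minimally generated by $\binom{n+c-1}{c}$ elements, making Lemma \ref{lemma1}(1) far too weak for $c\gs 2$. My plan is to construct, under the equicharacteristic hypothesis, a quasi-isomorphism between $K(I^c)$ and a complex assembled from $R$ in at most $n+1$ mapping cones. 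In characteristic $p>0$, the Frobenius endomorphism identifies $K(I^{[p^e]})$ with the Koszul complex on the regular sequence $x_1^{p^e},\dots,x_n^{p^e}$, whose $R$-level is $n+1$ by the case $c=1$; I would then transfer this bound to $K(I^c)$ through the inclusion $I^{[p^e]}\subseteq I^c$ for $p^e\gs c$, via a splitting or direct-summand argument. In characteristic zero, a reduction-mod-$p$ argument via generic freeness, or an explicit divided-power construction, should deliver the same bound. Making this Frobenius compression rigorous so that the resulting filtration genuinely has $n+1$ steps rather than inheriting the combinatorial count $\binom{n+c-1}{c}+1$ is what I expect to be the hardest step.
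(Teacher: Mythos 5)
Your treatments of parts (2) and (3) are correct and take routes that differ from, but are morally parallel to, the paper's. For (2) you invoke Theorem \ref{NewInterThm} with the generator $\overline 1\in\HH{0}{K(I)}=R/I$, whereas the paper uses the canonical element formulation directly through Proposition \ref{gapsmap}; both ultimately rest on the same deep input (the Direct Summand Conjecture). For (3) you build a chain map $K(I)\to K(x_1,\dots,x_d)$ by tensoring with the augmentation and apply Proposition \ref{gapsmap}; the paper instead applies its Lemma \ref{indecomposable} (which is Proposition \ref{gapsmap} with $\eta=\idmap$) using the vanishing $\HH{i}{K(I)}=0$ for $n-\depth(I,R)<i\ls n$. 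Both correctly establish the stronger bound $\depth(I,R)+1$. Also, your opening claim that all four upper bounds follow immediately from Lemma \ref{lemma1}(1) is wrong for (4) — the minimal generating set of $I^c$ has $\binom{n+c-1}{c}$ elements, so $K(I^c)$ has that many degrees and Lemma \ref{lemma1}(1) gives a vastly weaker bound; you do recognize this later, but the opening sentence is inconsistent with the rest.

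The genuine gap in part (1) is fatal to your plan. Applying Proposition \ref{gapsmap} with $\eta=\idmap\colon K(\fm)\to K(\fm)$ at $b=\edim R$ requires $\HH{i}{K(\fm)}=0$ for $0<i<\edim R$, but this range of Koszul homology is non-zero whenever $R$ is not Cohen--Macaulay of maximal embedding dimension (in fact $\HH{i}{K(\fm)}\ne 0$ for $0\ls i\ls\edim R-\depth R$). Your identity-map approach can therefore yield at best $\depth R+1$, not $\edim R+1$. Your fallbacks do not repair this: tensoring with a balanced big Cohen--Macaulay algebra $S$ makes $\fm S$ an ideal that in general is not generated by an $S$-regular sequence, so the Koszul homology of $K(\fm)\otimes_R S$ still fails to vanish in the intermediate range, and the ``suitably truncated target'' is never specified. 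What actually carries the paper's proof is a different target entirely: a minimal free resolution $G$ of $k$ together with the injectivity of the exterior-algebra map $\kappa^{\fm}\colon\bigwedge\Tor 1R{k}{k}\to\Tor{}{R}{k}{k}$ (Serre \cite{S2}, Tate \cite{Tate}), which forces $\Image(\eta_{\edim R})\not\subseteq\fm G_{\edim R}+\ZZ_{\edim R}(G)$ so that Proposition \ref{gapsmap} applies with $a=0,\ b=\edim R$. That classical injectivity is the essential ingredient, and your plan does not identify it.

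There is also a gap in the upper bound of part (4). Your Frobenius-compression idea is not carried out, and it faces a real obstruction: the Koszul complex on $x_1^{p^e},\dots,x_n^{p^e}$ is the Koszul complex of $I^{[p^e]}$, not of $I^c$, and these two ideals have different minimal generating sets of different cardinalities; there is no obvious splitting or direct-summand relation between $K(I^c)$ and $K(I^{[p^e]})$, and the characteristic zero reduction is only gestured at. The paper's Lemma \ref{powers} is far simpler and actually closes the argument: using equicharacteristicity, lift the minimal generators to a polynomial ring $S=k[x_1,\dots,x_n]$, observe $K(I^c,R)=K(\fn^c,S)\otimes_S R$, and invoke Lemma \ref{lemma1}(2) together with the computation $\level{S}{K(\fn^c,S)}=n+1$ from \cite[5.3]{ABIM}. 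Without a result of that type in hand, your outline for (4) does not constitute a proof.
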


The proof of the theorem requires several lemmas.  Recall that $\Tor{}{R}{R/I}{k}$ is a graded commutative $k$-algebra see \cite[2.3.2]{Avramov6Lectures}. Therefore, there is a natural map of $k$-algebras 
\[\kappa^I\colon \bigwedge \Tor{1}{R}{R/I}{k}\lra \Tor{}{R}{R/I}{k}.\]

\begin{lemma}\label{kappa} For an ideal $I$, we have an equality
\[\level{R}{K(I)} \gs \sup\{b\mid \kappa^I_b\neq 0\}+1.\]
\end{lemma}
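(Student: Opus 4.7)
The plan is to apply Proposition~\ref{gapsmap} with $K(I)$ as the source complex and a minimal free resolution of $R/I$ as the target complex $G$. First I would fix a minimal generating set $x_1,\dots,x_n$ of $I$, so that $K := K(x_1,\dots,x_n)$ has $K_1 \otimes_R k \cong \Tor{1}{R}{R/I}{k}$ canonically. Let $G$ be a minimal free resolution of $R/I$ and let $\eta \colon K \to G$ be any chain map lifting $\idmap_{R/I}$. Because the $x_i$ form a minimal generating set and $G$ is minimal, both $K \otimes_R k$ and $G \otimes_R k$ carry the zero differential, so $\eta \otimes_R k$ coincides with its own induced map on homology, giving a graded $k$-linear map $\bigwedge \Tor{1}{R}{R/I}{k} \to \Tor{}{R}{R/I}{k}$.

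The conceptual step is to identify this map with $\kappa^I$. The cleanest route is to first pick a Tate (DG-algebra) resolution $G'$ of $R/I$ together with a DGA lift $\eta' \colon K \to G'$; then $\eta' \otimes_R k$ is a $k$-algebra map that restricts to the canonical identification in degree one, so by the universal property of the exterior algebra it must coincide with $\kappa^I$. Uniqueness of chain lifts of $\idmap_{R/I}$ up to homotopy, together with the fact that both $G \otimes_R k$ and $G' \otimes_R k$ have zero differential, then transports the identification to $\eta \otimes_R k$.

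Now fix $b$ with $\kappa^I_b \neq 0$ and, using the identification above, choose $\alpha \in K_b$ with $\eta_b(\alpha) \notin \fm G_b$. Since $G$ is minimal we have $\ZZ_b(G) = \partial^G(G_{b+1}) \subseteq \fm G_b$, and $I \subseteq \fm$ gives $IG_b \subseteq \fm G_b$; hence $\eta_b(\alpha) \notin IG_b + \ZZ_b(G)$, so $\Image(\eta_b) \not\subseteq IG_b + \ZZ_b(G)$. The Koszul differential satisfies $\partial^K(K_i) \subseteq IK_{i-1}$, and $\HH{i}{G} = 0$ for $0 < i < b$ because $G$ resolves $R/I$. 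All hypotheses of Proposition~\ref{gapsmap} are then met with $a = 0$, yielding $\level{\PP}{K(I)} \gs b+1$; Remark~\ref{tacos} promotes this to the same bound on $\level{R}{K(I)}$. Taking the supremum over all $b$ with $\kappa^I_b \neq 0$ gives the stated inequality.

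The main obstacle I anticipate is verifying the identification of $\eta \otimes_R k$ with $\kappa^I$; once that is in hand, the rest reduces to routine bookkeeping with minimal resolutions and the Koszul differential.
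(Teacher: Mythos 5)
Your proof is correct and takes essentially the same route as the paper: apply Proposition~\ref{gapsmap} to the chain map $\eta\colon K(I)\to G$ lifting $\idmap_{R/I}$ into a minimal free resolution $G$, identify $\eta\otimes_R k$ with $\kappa^I$, and use minimality to place $\ZZ_b(G)$ inside $\fm G_b$. The paper states the identification $\kappa^I=\eta\otimes_R k$ without comment, while you supply the Tate-resolution/universal-property justification; and you invoke the proposition with the ideal $I$ rather than $\fm$, which is an inessential variant since $I\subseteq\fm$ and both satisfy the needed containment of boundaries.
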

\begin{proof}
Let $G$ be a minimal free resolution of $R/I$.  The map $K(I)\lra R/I$ extends to a map of complexes $\eta\colon K(I)\lra G$.  Notice $K(I)\otimes_R k \simeq \bigwedge \Tor{1}{R}{R/I}{k}$ and $G\otimes_R k\simeq  \Tor{}{R}{R/I}{k}$ as graded $k$-algebras, and $\kappa^I=\eta\otimes_R k$.  Therefore, $\kappa^I_b \neq 0$ is equivalent to  $\Image(\eta_b)\not\subseteq \fm G_b$. But since $\HH{b}{G}=0$, we have 
\[\ZZ_b(G)=\BB_b(G)\subseteq \fm G_b\] 
which implies $\Image(\eta_b)\not\subseteq \fm G_b+\ZZ_b(G)$.  The conclusion now follows from Proposition \ref{gapsmap}.
\end{proof}

\begin{proof}[{Proof of Theorem \ref{th:Koszul} (1)}]
Let $G$ be a minimal resolution of $k$. By Lemma \ref{lemma1} (1), we only need to prove $\level{R}{K(\fm)}\gs \edim(R)+1$.  The map 
\[\kappa^{\fm}\colon \bigwedge\Tor{1}{R}{k}{k}\lra \Tor{}{R}{k}{k}\] 
is an injection by \cite[Lemme 3 (b)]{S2} or alternatively \cite[Theorem 7]{Tate}.   Since the highest degree of $\bigwedge\Tor{1}{R}{k}{k}$ is $\edim(R)$, the map $\kappa^{\fm}_{\edim(R)}$ is non-zero.  Therefore, the result follows from Lemma \ref{kappa}.
\end{proof}

Theorem \ref{th:Koszul} (2) follows from the recently proved Canonical Element Conjecture.  The Canonical Element Conjecture was introduced in \cite{H} where it was shown to be equivalent to the Direct Summand Conjecture and implied by the existence of big Cohen-Macaulay modules.  
Recently the Direct Summand Conjecture was proved for all Noetherian rings in \cite{A} and \cite{B}.

\begin{proof}[{Proof of Theorem \ref{th:Koszul} (2)}]
Set $d = \dim R$ and $I = (x_1,...,x_d)$ where $x_1, ..., x_d$ is a system or parameters for $R$. Lemma \ref{lemma1} (1) implies $\level{R}{K(I)}\ls \dim(R) +1$. We now show the reverse inequality. Let $G$ be a minimal free resolution of $k$ and $\phi\colon K(I)\lra G$ be a lifting of the quotient map $R/I\lra k$. The claim follows from an equivalent form of The Canonical Element Conjecture (see \cite[page 505, (6)]{H}) saying that the induced map 
\[\phi_d\colon  K_d(I) \lra \frac{\Omega_d(k)}{I\Omega_d(k)}\cong\frac{G_{d}}{IG_{d}+Z_{d}(G)}\]
is nonzero.  The result now follows from Proposition \ref{gapsmap}.
\end{proof}

Recall that a complex $F$ is said to be \emph{minimal} if it is a complex of finitely generated free modules and $\partial_i(F_i) \subseteq \fm F_{i-1}$ for every $i$. 

\begin{lemma}\label{indecomposable}
Let $F$ be a minimal complex. Suppose $\HH{i}{F}=0$ for all $a<i<b$ with $a,b\in\mathbb{Z}$. If $\partial_b\ne 0$, then 
\[\level{R}F\gs b-a + 1.\]  
\end{lemma}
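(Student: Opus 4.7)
The proof should be a direct application of Proposition~\ref{gapsmap} with the choices $I = \fm$, $G = F$, and $\eta = \idmap_F \colon F \to F$. The minimality of $F$ is exactly the statement that $\Image(\partial^F_i) \subseteq \fm F_{i-1}$ for every $i$, which is the standing hypothesis on the source complex in that proposition, and the vanishing of $\HH{i}{G} = \HH{i}{F}$ for $a < i < b$ is given. So all the real content of the proof is to verify the remaining hypothesis, namely
\[
\Image(\eta_b) = F_b \not\subseteq \fm F_b + \ZZ_b(F).
\]

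I would verify this by contradiction. Suppose that $F_b = \fm F_b + \ZZ_b(F)$. Applying $\partial_b$ to both sides, using $\partial_b(\ZZ_b(F)) = 0$ and the $R$-linearity of $\partial_b$, yields
\[
\partial_b(F_b) = \partial_b(\fm F_b) + \partial_b(\ZZ_b(F)) = \fm\, \partial_b(F_b).
\]
Because $F$ is a minimal complex, $F_b$ is finitely generated, and hence so is the submodule $\partial_b(F_b) \subseteq F_{b-1}$. Nakayama's lemma then forces $\partial_b(F_b) = 0$, contradicting the assumption $\partial_b \ne 0$.

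With this non-containment in hand, Proposition~\ref{gapsmap} delivers $\level{\PP}{F} \gs b-a+1$, and Remark~\ref{tacos} upgrades the conclusion to $\level{R}{F} \gs b-a+1$. I do not expect any serious obstacle in carrying this out: the only idea is to choose the test chain map $\eta$ to be the identity of $F$, after which the single hypothesis that $F$ is minimal does double duty, once to verify the differential condition required by Proposition~\ref{gapsmap} and once, via Nakayama, to produce the required non-containment.
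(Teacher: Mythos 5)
Your proposal is correct and follows essentially the same route as the paper's proof: both take $\eta=\idmap_F$, $G=F$, $I=\fm$ in Proposition~\ref{gapsmap} and use Nakayama's lemma to show $F_b\not\subseteq\fm F_b+\ZZ_b(F)$. The only cosmetic difference is that you apply $\partial_b$ and run Nakayama on $\partial_b(F_b)$, whereas the paper runs it on the quotient $F_b/\ZZ_b(F)$; these modules are isomorphic, so the arguments are the same.
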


\begin{proof}

Let $\eta\colon  F\lra F$ be the identity map.  Since $\partial_b^F$ is not zero, $\ZZ_b(F_b)\ne F_b$.  Therefore, $\im\eta_b=F_b\not\subseteq \fm F_b+\ZZ_b(F)$ by Nakayama's Lemma.  The result now follows from Proposition \ref{gapsmap}.
\end{proof}

\begin{proof}[{Proof of Theorem \ref{th:Koszul} (3)}]
Set $n=\beta(I)$. Clearly $\partial_n^{K(I)}$ is not zero.   Since $\HH{i}{K(I)}=0$ for all $n-\depth(I,R)<i\ls n$, Lemma \ref{indecomposable} implies that
\[\level{R}{K(I)}\gs n-(n-\depth(I,R))+1=\depth(I,R) +1.\]
\end{proof}

\begin{remark}

In the equicharacteristic case, Theorem \ref{th:Koszul} (3) also follows from Theorem \ref{NITDG} by the inequality 
\[\superheight I\gs \hht I\gs \depth(I,R).\]

\end{remark}

\begin{lemma}\label{powers}
If $R$ is equicharacteristic, then for any proper ideal $I\subset R$ and any $c\in\mathbb{N}$ there is an inequality
\[\level{R}{K(I^c)} \ls \beta(I) + 1.\]
\end{lemma}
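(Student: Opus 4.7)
My plan is to use the equicharacteristic hypothesis to transfer the problem to a polynomial ring via base change, then establish the level bound in the polynomial ring case.

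First, I would set up the reduction to a polynomial ring. Let $n = \beta(I)$ and fix a minimal generating set $x_1, \ldots, x_n$ of $I$. Since $R$ is equicharacteristic, it contains a field $k$, so I can form the polynomial ring $A = k[t_1, \ldots, t_n]$ and the $k$-algebra map $\varphi\colon A \to R$ with $\varphi(t_i) = x_i$. Setting $\mathfrak{t} = (t_1, \ldots, t_n) \subseteq A$, we have $\varphi(\mathfrak{t}^c)R = I^c$. The Koszul complex $K := K^A(\mathfrak{t}^c)$ is a bounded complex of finitely generated free $A$-modules, and base change along $\varphi$ carries iterated mapping cones of free $A$-modules to iterated mapping cones of free $R$-modules, yielding $\level{R}{K \otimes_A R} \leq \level{A}{K}$. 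Since $K \otimes_A R$ is the Koszul complex on the $\varphi$-images of generators of $\mathfrak{t}^c$, which generate $I^c$ in $R$, the well-definedness of Koszul level (first lemma of this section) gives $\level{R}{K \otimes_A R} = \level{R}{K(I^c)}$. It therefore suffices to prove $\level{A}{K^A(\mathfrak{t}^c)} \leq n + 1$.

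Second, I would establish the bound $\level{A}{K^A(\mathfrak{t}^c)} \leq n + 1$ over the regular ring $A$ of global dimension $n$. The cohomology modules $H_i(K^A(\mathfrak{t}^c))$ are supported at $\mathfrak{t}$ and hence of finite length; by Auslander--Buchsbaum each has projective dimension $n$, and so $A$-level $n + 1$. Depth sensitivity of the Koszul complex, using that $\mathfrak{t}^c$ has grade $n$ in $A$, gives $H_i(K^A(\mathfrak{t}^c)) = 0$ for $i > m - n$, where $m = \binom{n+c-1}{c}$. The target is to exhibit a bounded complex of finitely generated free $A$-modules with at most $n + 1$ nonzero terms that is quasi-isomorphic to $K^A(\mathfrak{t}^c)$; pulling such a complex back along the filtration then bounds $\level{A}{K^A(\mathfrak{t}^c)}$ by $n+1$ through Lemma \ref{lemma1}(1).

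The hardest part is the construction of this short free complex. Depth sensitivity alone produces a soft truncation of length $m - n$, which is generally far larger than $n$, and the top truncated term is not free. The crux is to combine the regularity of $A$ with the graded structure of $A$ and $\mathfrak{t}^c$, assembling the homology modules into a free complex of length exactly $n$ via a careful truncation-and-resolution argument over the polynomial ring. This is the most delicate step and is where the equicharacteristic hypothesis enters, through the existence of the coefficient field $k$ that makes the reduction to $A = k[t_1, \ldots, t_n]$ possible.
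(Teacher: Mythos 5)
Your reduction to the polynomial ring $A = k[t_1,\dots,t_n]$ via the coefficient field $k$ matches the paper's proof exactly, including the use of Lemma \ref{lemma1}(2) for the base change $- \otimes_A R$. The gap is in the second half: you never actually establish the key estimate $\level{A}{K^A(\mathfrak{t}^c)} \ls n+1$. The paper obtains this by citing \cite[5.3]{ABIM}; you instead describe it as ``the most delicate step'' and sketch a plan without carrying it out, so the proof is incomplete.

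Worse, the plan you sketch cannot succeed. You propose to ``exhibit a bounded complex of finitely generated free $A$-modules with at most $n+1$ nonzero terms that is quasi-isomorphic to $K^A(\mathfrak{t}^c)$'' and then invoke Lemma \ref{lemma1}(1). But $K^A(\mathfrak{t}^c)$ is a \emph{minimal} complex (its differentials land in $\mathfrak{t}$ times the next free module), of length $m = \binom{n+c-1}{c}$, which exceeds $n$ once $c\gs 2$. By the uniqueness of minimal free complexes (\cite[1.1.2]{Avramov6Lectures}, invoked in the proof of Proposition \ref{KosInd}), any bounded free complex quasi-isomorphic to it has length at least $m$. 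So no free complex with $n+1$ terms quasi-isomorphic to $K^A(\mathfrak{t}^c)$ exists, and Lemma \ref{lemma1}(1) cannot apply. This is precisely the phenomenon the paper highlights after Proposition \ref{KosInd}: for minimal, derived-indecomposable perfect complexes, level can be arbitrarily smaller than length. The bound on $\level{A}{K^A(\mathfrak{t}^c)}$ genuinely needs the stronger machinery of \cite[5.3]{ABIM} (or an equivalent ghost-type or filtration argument that does \emph{not} reduce to shortening the complex), and your Auslander--Buchsbaum observation about the individual homology modules, while true, only controls the level of each $\HH{i}{K}$ separately, not the level of $K$ itself.
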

\begin{proof}
Since $R$ is equicharacteristic, it contains the residue field $k$.  Let $r_1, \ldots, r_n$ be a minimal generating set of $I$. Let $S = k \left[ x_1, \ldots, x_n \right]$ be a polynomial ring, set $\mathfrak{n} = (x_1, \ldots, x_n)$, and consider the map $f\colon S \longrightarrow R$ defined by $f(x_i) = r_i$. Note that 
\[K(I^c,R) = K(\mathfrak{n}^c,S) \otimes_S R=K(\mathfrak{n}^c,S) \dtensor{S} R.\] Hence, by Lemma \ref{lemma1} (2)  we have
\[\level{R}{K(I^c, R)} \ls \level{S}{K(\mathfrak{n}^c, S)}=n+1,\]
where the last equality holds by \cite[5.3]{ABIM}. The result follows.
\end{proof}

\begin{proof}[{Proof of Theorem \ref{th:Koszul} (4)}]
The statement follows from Theorem \ref{th:Koszul} (3) and Lemma \ref{powers}.  Indeed, these facts yield
\[n+1\gs \level{R}{K(I^c)}\gs \depth(I^c,R) +1=\depth(I,R) +1=n+1.\]
\end{proof}

A complex $F$ is said to be {\it indecomposable in the derived category} if $\HH{}{F}\ne 0$ and given any quasi-isomorphism $F \simeq A \oplus B$ with $A$ and $B$ complexes, then either $\HH{}{A}\simeq 0$ or $\HH{}{B} \simeq 0$.  
\begin{proposition}\label{KosInd}
Suppose $(R,\fm,k)$ is local.  For any proper ideal $I\subseteq R$, the complex $K(I)$ is  indecomposable in the derived category.
\end{proposition}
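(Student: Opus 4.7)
The plan is to reduce the indecomposability in $\mathsf{D}(R)$ to an indecomposability statement about actual chain complexes, and then run a bottom-up induction powered by a containment of Koszul cycles inside $\fm K(I)$. First, $\HH{0}{K(I)} = R/I \neq 0$ since $I$ is proper. Suppose toward contradiction that $K(I) \simeq A \oplus B$ in $\mathsf{D}(R)$ with both $\HH{}{A}$ and $\HH{}{B}$ nonzero. Replace $A$ and $B$ by minimal free resolutions $\tilde A$ and $\tilde B$; these are bounded minimal complexes of finitely generated free modules because $A$ and $B$, as direct summands of the perfect complex $K(I)$, are themselves perfect. Since $I \subseteq \fm$, $K(I)$ is itself a minimal complex of finitely generated free $R$-modules, and so is $\tilde A \oplus \tilde B$; uniqueness of the minimal free model of a perfect complex over a local ring then promotes the quasi-isomorphism $K(I) \simeq \tilde A \oplus \tilde B$ to an actual isomorphism of chain complexes $K(I) \cong \tilde A \oplus \tilde B$, with both $\tilde A$ and $\tilde B$ nonzero.

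Since $K_0(I) = R$ is indecomposable as an $R$-module, the decomposition $R = \tilde A_0 \oplus \tilde B_0$ forces, after swapping if necessary, $\tilde A_0 = R$ and $\tilde B_0 = 0$. I then prove by induction on $i \gs 0$ that $\tilde B_i = 0$, which gives the contradiction $\tilde B = 0$. The inductive engine is the following cycle-containment claim, which is where minimality of a chosen minimal generating set $x_1, \ldots, x_n$ of $I$ enters: $\ZZ_i(K(I)) \subseteq \fm K_i(I)$ for every $1 \ls i \ls n$. Granted this, if $\tilde B_{i-1} = 0$ then $\partial_i^{K(I)}$ carries $\tilde B_i$ into $\tilde B_{i-1} = 0$, so $\tilde B_i \subseteq \ZZ_i(K(I)) \subseteq \fm K_i(I)$. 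Because $\tilde B_i$ is a direct summand of the free module $K_i(I)$, the intersection $\tilde B_i \cap \fm K_i(I)$ equals $\fm \tilde B_i$, which gives $\tilde B_i = \fm \tilde B_i$, and Nakayama's lemma forces $\tilde B_i = 0$.

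The main obstacle is the cycle-containment claim $\ZZ_i(K(I)) \subseteq \fm K_i(I)$; this is the only step that uses the minimality of $x_1, \ldots, x_n$. The proof is a direct coefficient calculation on the Koszul differential. Expand $\omega \in \ZZ_i(K(I))$ in the wedge basis as $\omega = \sum_J c_J e_J$ and suppose for contradiction that some $c_{J_0}$ is a unit. After normalizing so that $c_{J_0} = 1$ and, by symmetry, $J_0 = \{1, 2, \ldots, i\}$, the vanishing of the coefficient of $e_{\{2, \ldots, i\}}$ in $\partial \omega$ yields an identity of the form $x_1 = \pm \sum_{s > i} c_{\{2, \ldots, i, s\}}\, x_s$. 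When $1 \ls i < n$ this places $x_1 \in (x_{i+1}, \ldots, x_n) \subseteq (x_2, \ldots, x_n)$, contradicting minimality of the generating set; when $i = n$ the wedge space is rank one and $\partial \omega = 0$ with $c_{\{1, \ldots, n\}}$ a unit forces $1 \in (0 :_R I)$, that is $I = 0$, contradicting $n = \beta(I) \gs 1$. The trivial case $n = 0$ simply says $K(I) = R$, which is indecomposable over the local ring $R$. Sign bookkeeping aside, this is the only non-formal input to the argument.
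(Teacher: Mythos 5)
Your proof is correct and follows essentially the same route as the paper's: reduce to a decomposition of minimal complexes, observe that $K_0(I)=R$ is indecomposable, and derive a contradiction from the fact that the Koszul cycles lie in $\fm K(I)$. The only noteworthy difference is that you prove the cycle-containment $\ZZ_i(K(I))\subseteq\fm K_i(I)$ directly by a coefficient computation, whereas the paper cites Serre (\emph{Local Algebra}, Chap.~IV, App.~I, \S 1, Prop.~3); your bottom-up Nakayama induction on $\tilde B_i$ is a slightly more elaborate packaging of the paper's observation that a basis element of $\tilde B$ in its lowest nonzero degree would be a cycle of $K(I)$ outside $\fm K(I)$.
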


\begin{proof}
Pick a minimal generating set $x_1,\ldots,x_n$ of $I$, and set $K:=K(x_1,\dots,x_n)$. By \cite[Chap. IV, App. I, \S 1, Proposition 3]{S}, the cycles of $K$, and hence the boundaries of $K$, lie in $\fm K$, and so $K$ is minimal.  Suppose that $K \simeq A \oplus B$ where $A$ and $B$ are perfect complexes.  Then $A$ and $B$ have minimal free resolutions, and so we may assume that $A\oplus B$ is minimal.  

Any quasi-isomorphism of bounded minimal complexes of free modules is an isomorphism (see \cite[1.1.2]{Avramov6Lectures}), hence $R= K_0 = A_0 \oplus B_0$. Since $R$ is indecomposable as an $R$-module, we must have $A_0 = 0$ or $B_0 = 0$. 

Suppose without loss of generality that $B_0 = 0$.  Since $\HH{}{B}\ne 0$, we can consider $a = \min \lbrace i | B_i \neq 0 \rbrace$. Since  $\Image(\partial^B_a)=0$, there exists $v \in K_a$ such that $v \notin \fm K_a$ and $\partial^K_a(v) = 0$. However, as mentioned earlier, the cycles of $K$ are contained in $\fm K$.  This gives a contradiction, implying that $\HH{}{B}=0$.  
\end{proof}

\begin{remark}

Theorem \ref{th:Koszul} (4) and Proposition \ref{KosInd} show that the difference between the length and the $R$-level of a minimal, derived indecomposable perfect complex can be arbitrarily long.

\end{remark}

\section*{Acknowledgements} This project was started at the Mathematical Research Communities 2015 in Commutative Algebra organized by S.~B.~Iyengar, L.~Sega, K.~Schwede, G.~Smith, and W. ~Zhang. We thank the MRC and the American Mathematical Society for their support and for providing us with such a stimulating environment. We are especially grateful to S.~B.~Iyengar for introducing us to this wonderful topic and advising us during the writing of this article.

We would also like to thank the referee whose thorough report greatly improved this article.


\begin{thebibliography}{99}

\bibitem{A} Y.~Andre, \emph{ La conjecture du facteur direct},  (2016), arXiv:1609.00345v1. (Preprint)







\bibitem{Avramov6Lectures} 
L.~L.~Avramov, \emph{Infinite free resolutions}, Six lectures on commutative algebra (Bellaterra, 1996), 1--118, Progr.~Math., \textbf{166}, Birkh\"auser, Basel, (1998).


\bibitem{ABIM}
L.~L.~Avramov, R.-O.~Buchweitz, S.~B.~Iyengar, and C.~Miller,
\emph{Homology of perfect complexes}, Adv. Math. \textbf{223} (2010), 1731--1781. Corrigendum: Adv. Math. \textbf{225} (2010), 3576--3578. 



\bibitem{AIatIll} L.~L.~Avramov and S.~B.~Iyengar, \emph{Constructing modules with prescribed cohomological support}, Illinois J. Math. \textbf{51} (2007), 1--20.

 

\bibitem{B} B.~Bhatt, \emph{On the direct summand conjecture and its derived variant},  (2016), arXiv:1608.08882. (Preprint)

\bibitem{BH} W.~ Bruns and J.~Herzog, \emph{Cohen--Macaulay rings}, Cambridge Studies in Advanced Mathematics, \textbf{39}, Cambridge, Cambridge University
Press, (1993).

\bibitem{BV}
A.~Bondal and M.~Van den Bergh, \emph{Generators and representability of functors in commutative and
              noncommutative geometry}, Mosc. Math. J. \textbf{3} (2003), 1--36.
              
\bibitem{ghost1} J.~D.~Christensen, \emph{Ideals in triangulated categories: phantoms, ghosts and skeleta}, Adv. Math. \textbf{136} (1998), 284--339. 

\bibitem{DGI} W.~G.~Dwyer, J.~P.~C.~Greenlees, and S.~B.~Iyengar, \emph{Finiteness in derived categories of local rings}, Comment. Math. Helv. \textbf{81} (2006), 383--432

\bibitem{EG} E.~G.~Evans and P.~Griffith, \emph{The syzygy problem}, Ann. of Math. \textbf{114} (1981), 323--333.


\bibitem{He} R.~Heitmann, \emph{The Direct Summand Conjecture in dimension three}, Ann. of Math. (2) \textbf{156} (2002), no. 2, 695--712
. 
\bibitem{H2} M.~Hochster,\emph{Big Cohen-Macaulay algebras in dimension three via Heitmann's theorem}, J. Algebra \textbf{254} (2002), no. 2, 395--408. 

\bibitem{H} M.~Hochster, \emph{Canonical elements in local cohomology modules and the direct summandconjecture}, J. Algebra \textbf{84} (1983), 503--553.

\bibitem{HH} M.~Hochster and C.~Huneke, \emph{Absolute integral closures are big Cohen-Macaulay algebras in characteristic p}, Bull. Amer. Math. Soc., \textbf{24} (1991), 137--143.

\bibitem{HH2}M.~Hochster and C.~Huneke, \emph{Infinite integral extensions and big Cohen--Macaulay algebras}, Ann. of Math. (2) \textbf{135} (1992), 53--89.

\bibitem{Iyengar}
S.~B.~Iyengar,
\textit{Depth for complexes, and intersection theorems},
Math. Z., \textbf{230} (1999), 545--567.



\bibitem{KK} H.~Krause and D.~ Kussin, \emph{Rouquier's theorem on representation dimension}, in: Trends in representation theory of algebras and related topics, 
Contemp. Math., \textbf{406}, Amer. Math. Soc., Providence, (2006), 95--103. 


\bibitem{Rouquier}
R.~ Rouquier, \emph{Dimensions of triangulated categories}, J. K-Theory \textbf{1} (2008), 193--256.

\bibitem{S}J.~P.~ Serre, \emph{Local Algebra}, Springer, Heidelberg, (2000).

\bibitem{S2}J.~P.~ Serre, \emph{Sur la dimension homologique des anneaux et des modules noeth\'eriens}, Proc. Int. Symp., Tokyo-Nikko (1956), 175--189.

\bibitem{Tate}J.~Tate, \emph{The homology of Noetherian rings and local rings}, Illinois J. Math \textbf{1} (1957), 14--27.

\bibitem{We}C.~A.~Weibel, \emph{An introduction to homological algebra}.  Cambridge University Press, (1994). 

  \end{thebibliography}
\end{document}